\newcommand{\NN}{{\mathbb N}}
\numberwithin{equation}{section}
\newtheorem{theo}{Theorem}    
\newtheorem{prop}[theo]{Proposition}  
\newtheorem{coro}[theo]{Corollary}  
\newtheorem{lemma}[theo]{Lemma} 
\newtheorem{defi}[theo]{Definition}
\begin{document}

\title[Cobham's theorem for substitutions]{Cobham's theorem for substitutions}
\author{Fabien Durand}
\address[F.D.]{\newline
Universit\'e de Picardie Jules Verne\newline
Laboratoire Ami\'enois de Math\'ematiques Fondamentales et Appliqu\'ees\newline
CNRS-UMR 6140\newline
33 rue Saint Leu\newline
80039 Amiens Cedex \newline
France.}
\email{fabien.durand@u-picardie.fr}

\begin{abstract}
The seminal theorem of Cobham has given rise during the last 40 years to a lot of works around non-standard numeration systems and has been extended to many contexts.
In this paper, as a result of fifteen years of improvements, we obtain a complete and general version for the so-called substitutive sequences.

Let $\alpha$ and $\beta$ be two multiplicatively independent Perron numbers.
Then, a sequence $x\in A^\mathbb{N}$, where $A$ is a finite alphabet, is both $\alpha$-substitutive and $\beta$-substitutive if and only if $x$ is ultimately periodic.
\end{abstract}

\maketitle

\section{Introduction}
The seminal theorem of Cobham has given rise during the last 40 years to a lot of works around non-standard numeration systems and has been extended to many contexts.
The original Cobham's theorem is concerned with integer base numeration system. 
In this paper, as a result of fifteen years of improvements, we obtain a complete and general version for the so-called substitutive sequences.

A set $E\subset \NN$ is {\it $p$-recognizable} for some
$p\in\NN\setminus\{0,1\}$, if the language consisting of the $p$-ary
expansions of the elements in $E$ is recognizable by a finite
automaton.
It is obvious to see that $E$ is recognizable if and only if it is $p^k$-recognizable.
In 1969, A.~Cobham obtained the following remarkable theorem.

\medskip

{\bf Cobham's theorem.}
\cite{Cobham:1969} 
{\it Let $p,q\ge 2$ be two multiplicatively independent
  integers (i.e., $p^k \not = q^\ell$ for all integers $k,\ell>0$). A
  set $E\subset\NN$ is both $p$-recognizable and $q$-recognizable if
  and only if $E$ is a finite union of arithmetic progressions.}

\medskip 

It is interesting to recall what S. Eilenberg wrote in his book \cite{Eilenberg:1974}: {\em The proof
  is correct, long and hard. It is a challenge to find a more
  reasonable proof of this fine theorem}.  
To this aim G. Hansel proposed a
simpler presentation in \cite{Hansel:1982}, also one can see 
\cite{Perrin:1990} or the dedicated chapter in
\cite{Allouche&Shallit:2003} for an expository presentation where a mistake is corrected in \cite{Rigo&Waxweiler:2006}. 

In \cite{Cobham:1972}, Cobham make precise the structure of these $p$-recognizable sets: they are exactly the images by letter-to-letter morphisms of constant-length substitution fixed points.
He also defined the notion of $p$-automatic sequences: The $n$-th term of the sequence is a mapping of the last reached state of the automaton when its input is the digits of $n$ is some given base $p$ numeration system.
Clearly $E\subset \mathbb{N}$ is $p$-recognizable if and only if its characteristic sequence is $p$-automatic.
Automata provide a nice and easy description of $p$-recognizable sets whereas substitutions afford an algorithm to produce such sets.
From there a lot of other characterizations have been given, 
the first and major being in terms of:

\begin{enumerate}
\item
$p$-definable sets (see \cite{Bruyere&Hansel&Michaux&Villemaire:1994} for a survey);
\item
$p$-kernel \cite{Eilenberg:1974};
\item
(when $p$ is prime) algebraic series over $\mathbb{F}_p (X)$ \cite{Christol:1979,Christol&Kamae&MendesFrance&Rauzy:1980}.
\end{enumerate}
  
This opened a wide range of further works we briefly describe below.

The Cobham theorem has been generalized to recognizable subsets of $\mathbb{N}^d$ by A.~L.~Semenov in \cite{Semenov:1977} for base $p$ numeration systems (also called later {\em standard numeration systems}) to give the so-called Cobham--Semenov theorem (see \cite{Bruyere&Hansel&Michaux&Villemaire:1994} for a nice survey).
Then the efforts were concentrated to simplify Cobham--Semenov's theorem and to generalize it to non-standard numeration systems given by linear recurrence relations like the Fibonacci one.
Alternative and satisfactory proofs (in terms of simplification) have been proposed, among them a very interesting logical proof due to C. Michaux and R. Villemaire \cite{Michaux&Villemaire:1993,Michaux&Villemaire:1996} (see also \cite{Bes:1997,Muchnik:2003}), using $p$-definable sets in the formalism of first order logic into some arithmetic models like the Presburger arithmetic $\langle\mathbb{N},+\rangle$ (see \cite{Bes:2001} for a survey on these methods).
Another recent proof makes use of ergodic measures \cite{Durand:2008} and the fact that $p$-recognizable subsets of $\mathbb{N}^ d$ are characterized by multidimensional substitutions \cite{Cerny&Gruska:1986, Salon:1987}.
This last characterization is an extension of Cobham's result of 1972.

The first result obtained for non-standard numeration systems is due to S. Fabre \cite{Fabre:1994}.
He considered subsets of $\mathbb{N}$ that are both $p$-recognizable and $U$-recognizable where $U$ is a non-standard numeration system associated with some restricted class of Pisot numbers.
Then, V. Bruy\`ere and F. Point \cite{Point&Bruyere:1997} proved such a result for subsets of  $\mathbb{N}^d$ under less restrictive (and more natural) assumptions on the involved Pisot numbers.
Later A. B\`es \cite{Bes:2000} succeeded in generalizing Cobham--Semenov's theorem for
subsets of $\mathbb{N}^d$ recognizable by automata in two non-standard numeration systems which are
associated with the minimal polynomials of multiplicatively independent Pisot numbers. 
Up to now it is the best generalization obtained for $d\geq 2$.

Prior to this result, for $d=1$, Cobham's theorem was extended to a much wider class of non-standard numeration systems in \cite{Durand:1998c} where no ``Pisot conditions'' are needed, and
later in \cite{Durand&Rigo:2009} to abstract numeration systems (defined in \cite{Rigo:2000}). 
This last result includes all previously known such result in dimension 1.
It is important to notice that the proofs in  \cite{Durand:1998c,Durand&Rigo:2009} used substitution fixed points while the papers \cite{Bes:2000,Bruyere&Hansel&Michaux&Villemaire:1994,Michaux&Villemaire:1993,Michaux&Villemaire:1996,Muchnik:2003,Point&Bruyere:1997} used a first order logic approach.

With a substitution is associated an integer square with non-negative entries. 
It is well-known (see \cite{Horn&Johnson:1990} for
instance) that such a matrix has a real eigenvalue $\alpha$ which is
greater than or equal to the modulus of all other eigenvalues. 
It is usually called
the dominant eigenvalue of $M$. 
This allows us to define the notion of $\alpha$-substitutive sequences. 
From the characterization given in \cite{Cobham:1972} it is easy to deduce that the characteristic sequences of $p$-recognizable subsets of $\mathbb{N}$ are $p$-substitutive sequences. 
This suggested to G. Hansel the following result (see \cite{Allouche&MendesFrance:1995}) that is the main result of this paper.

\begin{theo}
\label{theo:main}
Let $\alpha$ and $\beta$ be two multiplicatively independent Perron numbers. Let $A$ be a finite alphabet and $x$ be a sequence of $A^{\mathbb{N}}$. Then, 
$x$ is both $\alpha$-substitutive and $\beta$-substitutive
if and only if
$x$ is ultimately periodic.
\end{theo}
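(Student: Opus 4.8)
The plan is to establish the nontrivial implication, the converse being a routine construction: an ultimately periodic sequence is $\gamma$-substitutive for \emph{every} Perron number $\gamma$, since one can realize $\gamma$ as the dominant eigenvalue of a primitive substitution (Perron numbers are exactly the dominant eigenvalues of primitive nonnegative integer matrices) and then color an auxiliary fixed point so that a letter-to-letter map reads off the prescribed period. So I would first dispose of this direction and assume henceforth that $x$ is \emph{not} ultimately periodic while being both $\alpha$- and $\beta$-substitutive, aiming for a contradiction.

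\emph{Reduction to the primitive case.} Because the hypotheses only constrain the dominant eigenvalue, the two substitutions may be reducible. Using the structure of nonnegative integer matrices I would discard transient prefixes and restrict to the recurrent block realizing the dominant eigenvalue, and replace each substitution by a suitable power, so as to obtain primitive substitutive presentations of (a shift of) $x$. The block achieving the dominant eigenvalue has dominant eigenvalue exactly $\alpha$, resp.\ $\beta$, and passing to powers only replaces these by $\alpha^k,\beta^\ell$; since multiplicative independence is preserved under taking powers, I may assume both presentations are primitive. This matters because a primitive substitutive sequence is uniformly recurrent, is linearly recurrent, has well-defined letter and factor frequencies (the entries of the normalized Perron eigenvector, hence in $\mathbb{Q}(\alpha)$, resp.\ $\mathbb{Q}(\beta)$), and, by the return-word characterization used in \cite{Durand:1998c}, has only finitely many derived sequences up to the natural equivalence.

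\emph{The quantitative bridge.} For a primitive $\alpha$-substitution $\sigma$ the lengths $|\sigma^n(a)|$ grow like $\alpha^n$, and, crucially, linear recurrence forces the return words to the prefix $\sigma^n(w)$ to have lengths comparable to $\alpha^n$; thus the occurrence set of $\sigma^n(w)$ in $x$ has a gap set comparable to $\alpha^n\cdot\gap_w$, where $\gap_w$ is the finite set of return-word lengths to $w$, while its derived sequence remains, up to the tower, a single $\sigma$-governed object. The same holds with $\beta$ and a substitution $\rho$. Here the hypothesis enters decisively: multiplicative independence of $\alpha$ and $\beta$ means $\log\alpha/\log\beta$ is irrational, so by Kronecker's theorem there are infinitely many pairs $(n,m)$ with $n,m\to\infty$ and $\alpha^n/\beta^m\to 1$. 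At such matched scales $x$ carries, around a common long factor, two renormalization structures sitting at essentially the same physical scale $T\approx\alpha^n\approx\beta^m$: one scaling gaps by $\alpha^n$ through the $\sigma$-tower, the other by $\beta^m$ through the $\rho$-tower.

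\emph{Core argument and main obstacle.} Since each tower produces only finitely many derived sequences, a pigeonhole argument along the matched scales yields a single factor $w$ carrying simultaneously a $\sigma$-self-similarity and a $\rho$-self-similarity whose scale ratio $\alpha^n/\beta^m$ tends to $1$ yet, for each fixed matched pair, differs from $1$ because $\alpha^n\neq\beta^m$. I would then argue that a uniformly recurrent sequence admitting a genuine self-similarity of scale ratio arbitrarily close to $1$ must have its return words to $w$ collapse to length one, which is exactly periodicity, contradicting our standing assumption. The delicate point, and to my mind the hardest, is to make the ``same physical scale'' comparison rigorous in the general Perron setting: one cannot import the analytic numeration-system estimates available in the Pisot case (where the conjugates of $\alpha$ lie inside the unit disk and the $\rep$ data is well controlled), because for a general Perron number the subdominant conjugates may have modulus larger than one. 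I would therefore keep the argument purely combinatorial, comparing the actual return-word decompositions rather than geometric remainders, and the crux is to show that the finite family of derived sequences cannot stay compatible with both towers once their scales coincide, except by degenerating; controlling the error terms coming from $\alpha^n/\beta^m\neq 1$ while passing to the limit is where the real work lies.
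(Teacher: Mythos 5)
There is a genuine gap, and it sits exactly where the paper's new work lies: your ``reduction to the primitive case'' is not available. A sequence that is $\alpha$-substitutive in the sense of Definition \ref{defi:alphasub} is the coding of a fixed point of a substitution that may be reducible, and such a fixed point is in general not uniformly recurrent (e.g.\ $\tau(a)=aaab$, $\tau(b)=bc$, $\tau(c)=cb$: the gaps between occurrences of $a$ in $\tau^\omega(a)$ are unbounded), so no shift of it can be presented as a primitive substitutive sequence. Restricting to ``the recurrent block realizing the dominant eigenvalue'' does not rescue this: a primitive sub-substitution of $\sigma$ need not realize the dominant eigenvalue at all --- the paper's example $0\mapsto 0100$, $1\mapsto 12$, $2\mapsto 21$ has dominant eigenvalue $3$ while its unique primitive sub-substitution has dominant eigenvalue $2$ --- and even when one does, the return-word and derived-sequence machinery you want to run lives on that sub-system, not on $x$ itself. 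The primitive case is the previously known result (\cite{Durand:1998b}, \cite{Durand:2002a}); the content of this paper is precisely the passage from the general case to something usable, and it is achieved not by reducing to primitivity but by (i) invoking Theorem \ref{the:boundedgap} to get that recurrent factors occur with bounded gaps, and (ii) proving Lemma \ref{lemma:finallemma}: if the substitution has at least two distinct growth rates among its letters, a counting argument on concatenations of return words (Lemma \ref{lemma:concatenationrw}) forces ultimate periodicity, so that only the single-growth-rate (``good'') case remains, where \cite[Theorem 18]{Durand:2002a} applies. Nothing in your outline substitutes for this dichotomy.

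Second, even granting primitivity, your core step is explicitly left open: you state that the crux is to show that the finite family of derived sequences cannot remain compatible with both towers once their scales coincide, and that this is ``where the real work lies.'' That is indeed the heart of the primitive case, and Kronecker's theorem plus a pigeonhole on derived sequences is the right flavour (it is essentially the strategy of \cite{Durand:1998b}), but as written it is a plan rather than a proof. So the proposal both assumes away the paper's actual contribution and defers the remaining, known but nontrivial, part.
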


Partial answers have been given in \cite{Fabre:1994,Durand:1998b,Durand:2002b} with conditions on the substitutions.

Observe that the main result in \cite{Durand:1998c} on numeration systems is a consequence of the Cobham theorem for primitive substitutions established in \cite{Durand:1998b} thanks to a result of Fabre in \cite{Fabre:1995} that characterizes the characteristic sequences of recognizable sets of integers in non-standard numeration systems in terms of fixed points of substitutions.

Let us mention further generalizations of Cobham's theorem.

We could weaken the assumption that a sequence is $p$ and $q$-automatic 
assuming that a $p$-automatic and a $q$-automatic sequence share the same language (the set of finite words occurring in the sequence), and, asking
 if a Cobham theorem type result still holds in this context.
It holds as proven in \cite{Fagnot:1997}.
It was generalized to primitive substitutions in \cite{Durand:1998c}.
Translated to the framework of dynamical systems this result means that if two subshifts, one being generated by a $p$-automatic sequence and the other by a $q$-automatic one (with $p$ and $q$ multiplicatively independent), have a common topological factor then it contains a unique minimal (which is also uniquely ergodic) subshift, moreover it is periodic. 
This point of view is developped in \cite{Durand:2002b} and provides a new proof of Cobham's type theorem.
It uses the ergodic measures of the subshifts and the values they take on cylinder sets.
Moreover this way to tackle the problem also works in higher dimension.
For example, this can be used to get a new proof of Cobham--Semenov's theorem (see \cite{Durand:2008}) using multidimensional subshifts.
Moreover as subsets of $\mathbb{N}^d$ can be seen as tilings of $\mathbb{R}_+^d$ (which are self-similar, see \cite{Solomyak:1997} for the definition) it is not surprising to have generalizations to self-similar tiling dynamical systems \cite{Cortez&Durand:2008}.

The notion of recognizable subsets of $\mathbb{R}$ or $\mathbb{R}^d$, in standard numeration systems, can easily be defined. 
In a series of papers \cite{Boigelot&Brusten:2007,Boigelot&Brusten&Bruyere:2008,Boigelot&Brusten&Leroux:2009} the authors obtained a very nice generalization of Cobham's theorem.
In dimension one the same result (but in a different setting) has been obtained in \cite{Adamczewski&Bell:xy} independently.

The case of the ring of Gaussian integers with the numeration systems $((-a+i)^ n)_n$, $a\in \mathbb{N}\setminus \{ 0\}$, (see \cite{Katai&Szabo:1975}) has been investigated in  \cite{Hansel&Safer:2003}.
They obtained a very partial result and faced the problem to prove that $\frac{\ln a}{\ln b} , \frac{\tan^{-1} a}{2\pi}  \frac{\ln a}{\ln b} - \frac{\tan^{-1} b}{2\pi}, 1$ are rationally independent which seems to be a difficult number theoretic problem.
The generalization remains open.

In \cite{Allouche&Shallit:1992}, the authors defined the notion of $p$-regular sequences. They take values in a ring and are defined using the notion of $k$-kernel. 
When the ring is finite they are $k$-automatic sequences.
J. Bell generalized Cobham's theorem to this context in \cite{Bell:2005}.

G. Christol gave in \cite{Christol:1979} a famous and very concrete description of the elements of $\mathbb{F}_q ((t ))$ that are algebraic
over $\mathbb{F}_q (t)$ ($q=p^ n$ with $p$ a prime number); it shows that being an algebraic power series is equivalent to the
sequence of coefficients being $p$-automatic.
In \cite{Kedlaya:2006} K. Kedlaya generalized this theorem to the so-called
generalized power series of Hahn ($\mathbb{F}_q ((t^\mathbb{Q}))$) in terms of quasi-automatic functions. 
Then, B. Adamczewski and J. Bell \cite{Adamczewski&Bell:2008} proved an extension of Cobham's theorem to quasi-automatic functions and put it together with Kedlaya's result to derive an analogue of the main result in \cite{Christol&Kamae&MendesFrance&Rauzy:1980} asserting that 
a sequence
of coefficients represents two algebraic power series in distinct characteristics if and only if these
power series are rational functions.

For more details on all these developments we refer to \cite{Adamczewski&Bell:xx,Durand&Rigo:xx}.

\section{Words, morphisms, substitutions and numeration systems}

In this section we recall classical definitions and
notation.

\subsection{Words and sequences} 
An {\it alphabet} $A$ is a finite set of elements called {\it
  letters}. 
A {\it word} over $A$ is an element of the free monoid
generated by $A$, denoted by $A^*$. 
Let $x = x_0x_1 \cdots x_{n-1}$
(with $x_i\in A$, $0\leq i\leq n-1$) be a word, its {\it length} is
$n$ and is denoted by $|x|$. 
The {\it empty word} is denoted by $\epsilon$, $|\epsilon| = 0$. 
The set of non-empty words over $A$ is denoted by $A^+$. 
The elements of $A^{\NN}$ are called {\it sequences}. 
If $x=x_0x_1\cdots$ is a sequence (with $x_i\in A$, $i\in \NN$) and $I=[k,l]$ an interval of
$\NN$ we set $x_I = x_k x_{k+1}\cdots x_{l}$ and we say that $x_{I}$
is a {\it factor} of $x$.  If $k = 0$, we say that $x_{I}$ is a {\it
 prefix} of $x$. 
The set of factors of length $n$ of $x$ is written
$\mathcal{L}_n(x)$ and the set of factors of $x$, or the {\it language} of $x$,
is noted $\mathcal{L}(x)$. 
The {\it occurrences} in $x$ of a word $u$ are the
integers $i$ such that $x_{[i,i + |u| - 1]}= u$. 
If $u$ has an occurrence in $x$, we also say that $u$ {\em appears} in $x$.
When $x$ is a word,
we use the same terminology with similar definitions.

The sequence $x$ is {\it ultimately periodic} if there exist a word
$u$ and a non-empty word $v$ such that $x=uv^{\omega}$, where
$v^{\omega}= vvv\cdots $. 
It is {\it periodic} if $u$ is the empty word. 
A word $u$ is {\em recurrent} in $x$ if it appears in $x$ infinitely many times.
The set of recurrent words of $x$ is denoted by $\mathcal{L}_{\rm rec} (x)$.
A sequence $x$ is {\it uniformly recurrent} if every factor $u$ of $x$
appears infinitely often in $x$ and the greatest
difference of two successive occurrences of $u$ is bounded.

\subsection{Morphisms and matrices} 
Let $A$ and $B$ be two alphabets. Let $\tau$ be a {\it morphism} 
from $A^*$ to $B^*$. Such a map induces by concatenation a morphism from
$A^*$ to $B^*$.  
When $\tau (A) = B$, we say $\tau$ is a {\em coding}. 
Thus, codings are onto.
If $\tau (A)$ is included in $B^+$, it induces by concatenation a map
from $A^{\NN}$ to $B^{\NN}$. These two maps are also called $\tau$.
With the morphism $\tau$ is naturally associated the matrix $M_{\tau} =
(m_{i,j})_{i\in B , j \in A }$ where $m_{i,j}$ is the number of
occurrences of $i$ in the word $\tau(j)$.

It is well-known that any non-negative square matrix $M$ has a real eigenvalue $\alpha$ which
is real and greater or equal to the modulus of any other eigenvalue.  
We call $\alpha$ the {\em dominating eigenvalue} of $M$. 
Moreover $\alpha$ is a {\em Perron number}: it is an algebraic real number $>1$
strictly dominating the modulus of all its algebraic conjugates (see
for instance \cite{Lind&Marcus:1995}).
The matrix $M$ is called {\it primitive} if it has a power such that all its coefficients are positive. 
In this case the dominating eigenvalue is unique, positive and it is a simple
root of the characteristic polynomial. 
This is Perron's Theorem.

\subsection{Substitutions and substitutive sequences}

A {\it substitution} is a morphism $\tau : A^* \rightarrow
A^{*}$. 
If there exist a letter $a\in A$ and a word $u\in A^+$ such that $\sigma(a)=au$ and moreover, if $\lim_{n\to+\infty}|\sigma^n(a)|=+\infty$, then $\sigma$ is said
    to be {\em prolongable on $a$}.

Since for all $n\in\mathbb{N}$, $\sigma^n(a)$ is a prefix of $\sigma^{n+1}(a)$ and because
     $|\sigma^n(a)|$ tends to infinity with $n$, the
     sequence $(\sigma^n(aaa\cdots ))_{n\ge 0}$ converges (for the usual
     product topology on $A^\mathbb{N}$) to a sequence denoted by
     $\sigma^\omega(a)$. 
The morphism $\sigma$ being continuous for the product topology, $\sigma^\omega(a)$ is a fixed point of $\sigma$: $\sigma (\sigma^\omega(a)) = \sigma^\omega(a)$.
A sequence obtained in
    this way by iterating a prolongable substitution is said to be {\em purely substitutive} (w.r.t. $\sigma$). 
If $x\in
    A^\mathbb{N}$ is purely substitutive and if $\phi:A^*\to B^*$ is a
    coding, then the sequence $y=\phi (x)$ is said to be {\em
      substitutive}. 

Whenever the matrix associated with $\tau $ is
primitive we say that $\tau$ is a {\it primitive substitution}.  We
say $\tau$ is a {\it growing} substitution if $\lim_{n\rightarrow
  +\infty} |\tau^n (b)| = +\infty$ for all $b\in A$. It is
{\it erasing} if there exists $b\in A$ such that $\tau (b)$ is the
empty word.  

\begin{defi}
\label{defi:alphasub}
Let $A$ be a finite alphabet.
A sequence $x\in B^\mathbb{N}$ is said to be $\alpha$-{\em substitutive} (w.r.t. $\sigma$) if $\sigma : A^*\to A^*$ is a substitution prolongable on the letter $a$ such that:

\begin{enumerate}
\item
\label{cond1sub}
all letters of $A$ have an occurrence in $\sigma^\omega (a)$;  
\item
\label{cond2sub}
$\alpha$ is the dominating eigenvalue of the incidence matrix of $\sigma$;
\item
there exists a coding $\phi : A^*\to B^*$ with  $x=\phi (\sigma^\omega (a))$.
\end{enumerate}
  
If moreover $\sigma$ is primitive, then $\phi
    (\sigma^\omega (a))$ is said to be a {\em primitive
      $\alpha$-substitutive infinite sequence (w.r.t. $\sigma$)}.
\end{defi}

The condition \eqref{cond1sub} is important.
Indeed, let consider the substitution $\tau $ defined by $\tau (a)=aaab$,
$\tau (b) = bc$ and $\tau (c) = cb$.
It has three fixed points $\tau^\omega (a)$, $\tau^\omega (b)$ and $\tau^\omega (c)$.
The sequence $\tau^\omega (a)$ is $3$-substitutive and 
we do not want to say that $\tau^\omega (b)$ and $\tau^\omega (c)$ are $3$-substitutive.
With our definition they are $2$-substitutive.

\subsection{Growth type and erasures}\label{subsec:growth}

The following well known result (see Chapter III.7 in \cite{Salomaa&Soittola:1978}) will be very useful in the sequel.

\begin{prop}
\label{prop:croissance}
Let $\sigma : A^*\to A^*$ be a substitution.
For all $a\in A$, one of the following two situations occurs

\begin{enumerate}
\item
$\exists N\in{\mathbb N} : \forall n>N,\ |\sigma^n(a)|=0$, or,
\item
\label{Ggrowthorder}
there
exist $d(a)\in{\mathbb N}$ and real numbers $c(a),\theta(a)$ such that

$$
\lim_{n\to+\infty} \frac{|\sigma^n(a)|}{c(a)\, n^{d(a)}\, \theta(a)^n}=1.
$$
\end{enumerate}    

Moreover, under the situation \eqref{Ggrowthorder},
    for all $i\in\{0,\ldots,d(a)\}$ there exists a letter $b\in A$
    appearing in $\sigma^j(a)$ for some $j\in{\mathbb N}$ satisfying

$$
\lim_{n\to+\infty} \frac{|\sigma^n(b)|}{c(b)\, n^i\,
      \theta(a)^n}=1.
$$
\end{prop}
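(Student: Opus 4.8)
The plan is to prove Proposition~\ref{prop:croissance}, which describes the asymptotic growth of $|\sigma^n(a)|$ for each letter $a$ under iteration of a substitution $\sigma$. This is a classical result from the theory of matrices with non-negative entries applied to the incidence matrix $M_\sigma$, so I would organize the argument around the combinatorial structure of the dependency relation between letters.

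First I would set up the bookkeeping. For a letter $a$, the quantity $|\sigma^n(a)|$ equals the sum of the entries in the column (or row, depending on convention) of $M_\sigma^n$ indexed by $a$; equivalently, if $L$ denotes the row vector of all ones, then $|\sigma^n(a)| = L M_\sigma^n e_a$ where $e_a$ is the standard basis vector. The asymptotics of $M_\sigma^n$ are governed by the spectral structure of $M_\sigma$, but because $M_\sigma$ need not be primitive or even irreducible, I cannot appeal directly to Perron--Frobenius. Instead I would pass to the condensation of the dependency graph: define a directed graph on the alphabet $A$ where there is an edge from $b$ to $c$ whenever $c$ occurs in $\sigma(b)$, and decompose $A$ into strongly connected components partially ordered by reachability. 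On each strongly connected component the restriction of $M_\sigma$ is irreducible, so Perron--Frobenius gives a well-defined dominating eigenvalue for that component.

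Next I would analyze growth along paths in the condensation. The letters reachable from $a$ determine $|\sigma^n(a)|$, and the growth rate $\theta(a)$ is the maximum of the component eigenvalues over all components reachable from $a$; letters with no growth (entering only components whose spectral radius is $0$ or $1$ with trivial contribution, i.e. that eventually die) fall into case~(1), where $|\sigma^n(a)| = 0$ for large $n$. For the remaining letters, case~(2) holds. The polynomial factor $n^{d(a)}$ arises from chains of reachable strongly connected components all sharing the same dominating eigenvalue $\theta(a)$: each additional such component in a maximal chain contributes one extra power of $n$, which is the standard phenomenon of a non-diagonalizable block or, more precisely, the convolution of several geometric growth rates of the same base. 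This is where I would invoke the structure theorem from Chapter III.7 of \cite{Salomaa&Soittola:1978}, giving the existence of $c(a)$, $d(a)$ and $\theta(a)$ with the stated limit, via a careful induction on the height of $a$ in the component order.

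For the final assertion, I would argue that for each $i \in \{0,\ldots,d(a)\}$ one can extract a witness letter $b$ realizing the intermediate growth $c(b)\,n^i\,\theta(a)^n$. The idea is that a maximal chain of components of common eigenvalue $\theta(a)$ realizing the top exponent $d(a)$ passes through components at every intermediate depth $i$; choosing a letter $b$ in the component at depth $i$ (which appears in some $\sigma^j(a)$ since it is reachable from $a$) yields exactly the growth order $n^i\,\theta(a)^n$. The main obstacle I anticipate is controlling the polynomial degree precisely: one must show that the exponent of $n$ is \emph{exactly} the length of the longest chain of equal-eigenvalue components rather than merely bounded by it, and that the leading coefficient $c(b)$ is genuinely nonzero. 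This requires showing that contributions from distinct chains do not cancel --- which holds because all entries are non-negative, so there is no cancellation --- and identifying the dominant term among competing growth rates, keeping the $\theta(a)^n$ factor fixed while tracking the polynomial order through the nilpotent part of the Jordan structure restricted to the relevant generalized eigenspace.
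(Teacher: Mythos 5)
First, note that the paper does not prove this proposition at all: it is quoted as a known result with a pointer to Chapter III.7 of \cite{Salomaa&Soittola:1978}, so there is no in-paper argument to compare yours against. Your outline --- condensation of the dependency graph into strongly connected components, Perron--Frobenius on each component, and the polynomial degree $d(a)$ read off from the longest chain of reachable components sharing the maximal spectral radius --- is indeed the standard route taken in that reference, and the non-cancellation remark (non-negativity of all contributions prevents competing chains from killing the leading coefficient) is the right key point for the ``moreover'' clause. However, at the decisive moment you write that you would ``invoke the structure theorem from Chapter III.7 of \cite{Salomaa&Soittola:1978}'' to obtain the existence of $c(a)$, $d(a)$, $\theta(a)$: that structure theorem essentially \emph{is} the statement to be proved, so as written the proposal assumes its conclusion rather than deriving it from the component decomposition you set up.

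Second, there is a step that genuinely fails as described. You propose to extract the asymptotics from ``the nilpotent part of the Jordan structure restricted to the relevant generalized eigenspace'' of $\theta(a)$. But an irreducible non-negative matrix that is not primitive has several eigenvalues of modulus equal to its spectral radius (namely $\theta\omega$ for $\omega$ ranging over the $p$-th roots of unity, $p$ the period), and their contributions to $L M_\sigma^n e_a$ are of the same order $n^{d}\theta^n$ with oscillating coefficients; restricting attention to the generalized eigenspace of $\theta(a)$ alone discards them. Concretely, for $\sigma(1)=2$, $\sigma(2)=11$ one has $|\sigma^n(1)|=2^{\lfloor n/2\rfloor}$ and $\theta=\sqrt2$, so $|\sigma^n(1)|/(c\,\theta^n)$ oscillates between two values and the stated limit does not exist for any $c$. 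Any correct treatment must either pass to a power $\sigma^k$ with $k$ a common multiple of the periods of the relevant components (which the reduction in Subsection \ref{subsec:reduction} licenses, and which is how the proposition is actually used), or weaken the conclusion to two-sided estimates; your proposal does neither and does not acknowledge the issue. A smaller slip: you place letters reaching ``only components whose spectral radius is $0$ or $1$'' in case (1), but a letter $b$ with $\sigma(b)=b$ reaches a component of spectral radius $1$ and satisfies case (2) with $(d,\theta)=(0,1)$; case (1) consists exactly of the mortal letters, i.e.\ those from which only acyclic (spectral radius $0$) components are reachable.
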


This justifies the following definition.

\begin{defi} 
Let $\sigma : A^* \to A^*$ be a non-erasing substitution. 
For all $a\in A$, the couple $(d(a), \theta(a))$ defined in Proposition \ref{prop:croissance} is called the {\em growth type} of $a$ and $\theta (a)$ is called the {\em growth rate} of $a$ (w.r.t. $\sigma$). 
The growth type of a word is the maximal growth type of its letters.
If
$(d,\theta)$ and $(e,\beta)$ are two growth types we say that
$(d,\theta)$ is {\em less than} $(e,\beta)$ (or $(d,\theta) <
(e,\beta)$) whenever $\theta < \beta$ or, $\theta = \beta$ and $d<e$.
\end{defi}

We say that $a\in A$ is a {\em growing letter}\index{growing letter}
if $(d(a),\theta (a))>(0,1)$ or equivalently, if $\lim_{n\to +\infty}
|\sigma^n (a)| = +\infty $.

We set $\Theta := \max \{ \theta(a) \mid a\in A \}$, $D := \max \{ d(a) \mid 
a\in A, \theta(a) = \Theta  \}$ and $A_{max} := \{a\in A \mid
\theta (a) = \Theta , d(a) = D \}$.  
The dominating eigenvalue of $M$ is
$\Theta$.  
Consequently, any sequence which is substitutive w.r.t. $\sigma$ is $\Theta$-substitutive.
We will say that the letters of $A_{max}$ are {\em of
  maximal growth}\index{maximal growth} and that $(D,\Theta)$ is the {\em growth type}\index{growth type} of
$\sigma$. 
Observe that if $\Theta=1$, then in view of the last part of Proposition
\ref{prop:croissance}, there exists at least one non-growing letter (of
growth type $(0,1)$).  
Otherwise stated, if a letter has a polynomial
growth, then there exists at least one non-growing letter.
Consequently $\sigma$ is growing ({\em i.e.}, all its letters are growing)
if and only if $\theta (a) > 1$ for all $a\in A$.  
We observe that for all $k\geq 1$, the growth type of $\sigma^k $ is $(D, \Theta^k )$.

The following theorem allows us to suppose that the substitutions we deal with are non-erasing.
We refer to \cite[Th\'eor\`eme 4]{Cassaigne&Nicolas:2003} for the proof of this theorem even if Property (3) is not stated in this paper but is clear from the proof. 
Other references with other proofs are in \cite{Cobham:1968,Pansiot:1982,Allouche&Shallit:2003,Honkala:2009}.

\begin{theo}
\label{theo:Cassaigne&Nicolas}
Let $\sigma : A^*\to A^*$ be a substitution prolongable on the letter $a$ and $\phi : A^* \to C^*$ be a morphism such that $\phi (\sigma^\omega (a))$ belongs to $A^\mathbb{N}$. 
Then, $\phi (\sigma^\omega (a)) = \psi (\tau^\omega (b))$ where:

\begin{enumerate}
\item
$\psi : B^* \to C^*$ is a coding;
\item
$\tau : B^* \to B^*$ is a non-erasing substitution prolongable on $b$;
\item
there exist $\gamma : A\to B^*$ and $k$ such that 

$$\gamma \circ \sigma^k = \tau \circ \gamma \ {\rm and } \ 
\psi \circ \gamma = \phi .
$$
\end{enumerate} 
\end{theo}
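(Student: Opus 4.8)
\emph{(Trimming the alphabet.)} The whole point is to remove two independent sources of erasing while preserving the output word $x=\phi(\sigma^{\omega}(a))$ and recording how the new substitution relates to the old one. I first trim the alphabet. Call $c\in A$ \emph{useless} if $\phi(\sigma^{n}(c))=\epsilon$ for every $n$; such a letter has $\phi(c)=\epsilon$, the useless letters are stable under $\sigma$, and deleting them from $w=\sigma^{\omega}(a)$ changes neither the projected fixed point nor its $\phi$-image, since one only removes factors on which $\phi$ vanishes. Next I isolate the \emph{mortal} letters, those of type~(1) in Proposition~\ref{prop:croissance}; as they are finite in number there is a uniform $m$ with $\sigma^{m}(c)=\epsilon$ for every mortal $c$. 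The fixed point of $\sigma^{m}$ is again $w$, so I work with $\sigma^{m}$, after which every mortal letter dies in one step and every non-mortal $c$ has a non-mortal letter inside $\sigma^{m}(c)$ (otherwise $c$ would be mortal).

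\emph{(Bounded dying blocks.)} The engine of the proof is that the letters to be absorbed occur in blocks of bounded length. Writing $w=\sigma^{m}(w)=\prod_{j}\sigma^{m}(w_{j})$ and dropping the empty factors produced by the mortal $w_{j}$, the fixed point $w$ is a concatenation of the finitely many words $\sigma^{m}(c)$ with $c$ non-mortal; each has length at most $L:=\max_{c}|\sigma^{m}(c)|$ and contains a non-mortal letter, so any maximal run of mortal letters in $w$ meets at most two consecutive such factors and has length $<2L$. After the useless-letter reduction the same dichotomy of Proposition~\ref{prop:croissance} controls the non-emitting matter: a non-growing letter contributes only bounded runs, whereas a growing useful letter produces emitting descendants at bounded depth, so on a high enough power the non-emitting material separating two consecutive emitting positions is uniformly bounded.

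\emph{(Chunks and the three maps.)} Fix $k$ a large multiple of $m$. I cut $w$ into consecutive \emph{chunks}, each being a bounded block of non-emitting letters followed by exactly one emitting letter; by the previous step only finitely many chunk-words arise, and these form the new alphabet $B$. Let $\psi:B^{*}\to C^{*}$ send a chunk to the single letter of $C$ it emits: this is letter-to-letter, hence a coding. Let $\gamma:B^{*}\to A^{*}$ send a chunk to the word over $A$ that it is, so that $\phi\circ\gamma=\psi$ and $\gamma$ carries the $B$-coded version of $w$ back to $w$. Finally define $\tau:B^{*}\to B^{*}$ by cutting each $\sigma^{k}(\gamma(\beta))$ into chunks, which is precisely the semiconjugacy $\sigma^{k}\circ\gamma=\gamma\circ\tau$ relating the two substitutions. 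Then $\tau$ is non-erasing, because every emitting position has an emitting descendant within $k$ steps; it is prolongable on the chunk $b$ that begins $w$; and $\psi(\tau^{\omega}(b))=\phi(\sigma^{\omega}(a))$. This produces the data $(\psi,\tau,\gamma,k)$ required by the statement.

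\emph{(The main obstacle.)} The delicate point is the bounded-block step in its combined form. A $\sigma$-mortal letter may still carry output under $\phi$, and a letter killed by $\phi$ may have descendants that do emit; neither can be discarded, since each contributes genuine letters of $x$ at genuine positions, yet the chunk alphabet $B$ is finite only when the non-emitting matter attached to each emitting position is bounded. Securing this uniform bound is where the growth dichotomy of Proposition~\ref{prop:croissance} and the passage to a high power $\sigma^{k}$ are both essential: after them, each retained letter's first emission and each mortal block lie in bounded windows. Granted the bound, checking that the chunk boundaries of the words $\sigma^{k}(\gamma(\beta))$ align, so that $\tau$ is well defined and the two identities hold as identities of morphisms, is then a finite verification on the decorated alphabet $B$.
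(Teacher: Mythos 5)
The paper does not actually prove this statement: it quotes it from Cassaigne--Nicolas (Th\'eor\`eme~4 of their paper, with other proofs attributed to Cobham, Pansiot, Allouche--Shallit, Honkala), so your attempt has to be judged on its own merits. It has a genuine gap at its central step. The claim that, after deleting useless letters and passing to a high power, the non-emitting material between two consecutive emitting positions of $w=\sigma^\omega(a)$ is uniformly bounded is false, and the mechanism you invoke for it (``a growing useful letter produces emitting descendants at bounded depth'') does not imply it: descendants control what lies \emph{below} a letter, not what lies \emph{beside} an occurrence in the fixed point. Concretely, take $A=\{a,e,f\}$, $\sigma(a)=ae$, $\sigma(e)=ff$, $\sigma(f)=ee$, and $\phi(a)=0$, $\phi(f)=1$, $\phi(e)=\epsilon$. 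Here $\sigma$ is non-erasing and growing (no mortal letters), no letter is useless in your sense since $\phi(\sigma(e))=11\neq\epsilon$, and $e$ has emitting descendants at depth one; yet $w=a\,e\,f^{2}\,e^{4}\,f^{8}\,e^{16}\cdots$ contains the runs $e^{4^{n}}$ of non-emitting letters, so your chunk alphabet $B$ is infinite and the construction collapses, while $\phi(w)=01^{\omega}$ is a perfectly good infinite sequence covered by the theorem. Passing to $\sigma^{k}$ changes neither $w$ nor its runs; what it changes is which letters are useless ($e$ becomes useless for $\sigma^{2}$), so the trimming and the choice of $k$ must be interleaved and justified --- your proof fixes $k$ after trimming and offers no argument for the needed bound. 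The known proofs avoid this trap by stratifying letters according to whether $|\phi(\sigma^{n}(c))|$ stays bounded as $n\to\infty$, a dichotomy genuinely different from your emitting/non-emitting one.

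Two further points, more easily repaired but still wrong as written. First, your $\gamma$ goes from $B^{*}$ to $A^{*}$ (a chunk is sent to the word it spells) and satisfies $\sigma^{k}\circ\gamma=\gamma\circ\tau$ and $\phi\circ\gamma=\psi$, whereas the statement --- and its use in the proof of Corollary~\ref{coro:nonerasing}, which needs $M_\psi M_\gamma=M_\phi$ with $M_\gamma$ the matrix of a map $A\to B^{*}$ --- requires $\gamma:A\to B^{*}$ with $\gamma\circ\sigma^{k}=\tau\circ\gamma$ and $\psi\circ\gamma=\phi$. The required identity forces $|\gamma(c)|=|\phi(c)|$, hence $\gamma(c)=\epsilon$ exactly when $\phi(c)=\epsilon$, and then $\gamma\circ\sigma^{k}=\tau\circ\gamma$ forces $\phi(\sigma^{k}(c))=\epsilon$ for every non-emitting $c$ --- a nontrivial constraint on $k$ (in the example above it holds only for even $k$) that a chunk decomposition does not address. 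Second, $\phi$ is an arbitrary morphism, so an emitting letter may emit a word of length greater than one; your $\psi$ is then not letter-to-letter, and each emitting letter must additionally be split into $|\phi(c)|$ pieces for $\psi$ to be a coding.
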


From the classical theory of non-negative matrices (see \cite{Horn&Johnson:1990}) we deduce the following corollary.

\begin{coro}
\label{coro:nonerasing}
The image by a non-erasing morphism of an $\alpha$-substitutive sequence is an $\alpha^k$-substitutive sequence with respect to a non-erasing substitution for some $k$. 
\end{coro}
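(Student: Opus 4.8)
The plan is to reduce to Theorem~\ref{theo:Cassaigne&Nicolas} and then pin down the exponential growth rate via Proposition~\ref{prop:croissance}. Write the given $\alpha$-substitutive sequence as $x=\phi(\sigma^\omega(a))$ with $\phi$ a coding and $\sigma$ prolongable on $a$, and let $f$ be the non-erasing morphism. Then $f(x)=\Phi(\sigma^\omega(a))$ where $\Phi=f\circ\phi$ is non-erasing (a composition of a coding with a non-erasing morphism), so $f(x)$ is infinite and the pair $(\sigma,\Phi)$ satisfies the hypotheses of Theorem~\ref{theo:Cassaigne&Nicolas}. Applying that theorem with $\Phi$ in the role of its morphism produces a coding $\psi:B^*\to C^*$, a non-erasing substitution $\tau:B^*\to B^*$ prolongable on a letter $b$, a morphism $\gamma$ and an integer $k$ with
$$
\gamma\circ\sigma^k=\tau\circ\gamma,\qquad \psi\circ\gamma=\Phi,\qquad f(x)=\psi(\tau^\omega(b)).
$$
This already delivers the required non-erasing substitution and coding; it remains to verify the two arithmetic conditions of Definition~\ref{defi:alphasub}, namely that every letter occurs in the fixed point (condition~\eqref{cond1sub}) and that the dominating eigenvalue is $\alpha^k$ (condition~\eqref{cond2sub}).

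For the first condition I would restrict $\tau$ and $\psi$ to the set $B''$ of letters occurring in $\tau^\omega(b)$. Since $\tau(\tau^\omega(b))=\tau^\omega(b)$, the set $B''$ is stable under $\tau$, so $\tau|_{B''}$ is a well-defined non-erasing substitution prolongable on $b$ with the same fixed point, condition~\eqref{cond1sub} holds by construction, and $\psi|_{B''}$ is a coding onto the alphabet of letters actually occurring in $f(x)$. Two preliminary observations set up the growth analysis. First, $\gamma$ is non-erasing: if $\gamma(a')=\epsilon$ for some letter, then $\Phi(a')=\psi(\gamma(a'))=\epsilon$, contradicting that $\Phi$ is non-erasing. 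Second, iterating $\gamma\circ\sigma^{k}=\tau\circ\gamma$ and using condition~\eqref{cond1sub} for $\sigma$ (all letters of $A$ occur in $\sigma^\omega(a)$) shows $B''=\bigcup_{a'\in A}\{\text{letters occurring in }\gamma(a')\}$.

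The heart of the argument is the identification of the dominating eigenvalue of $\tau|_{B''}$, which by the remarks of Subsection~\ref{subsec:growth} equals its maximal growth rate $\Theta(\tau|_{B''})=\max_{c\in B''}\theta_\tau(c)$. Iterating the intertwining relation gives $\tau^n\circ\gamma=\gamma\circ\sigma^{kn}$, hence $\tau^n(\gamma(a'))=\gamma(\sigma^{kn}(a'))$ for every $a'\in A$. As $\gamma$ is non-erasing and of bounded image length, $|\tau^n(\gamma(a'))|$ and $|\sigma^{kn}(a')|$ have the same order, so by Proposition~\ref{prop:croissance} the word $\gamma(a')$ grows under $\tau$ at rate $\theta(a')^k$, where $(d(a'),\theta(a'))$ is the growth type of $a'$ under $\sigma$. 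Since $\theta(a')\le\alpha$ for all $a'$, every letter of each $\gamma(a')$, and thus every letter of $B''$, has $\tau$-growth rate at most $\alpha^k$, giving $\Theta(\tau|_{B''})\le\alpha^k$. For the reverse inequality I would invoke condition~\eqref{cond1sub} once more: a letter of maximal growth type $(D,\alpha)$ occurs in $\sigma^\omega(a)$, hence in some $\sigma^j(a)$, which forces $a$ itself to have growth type $(D,\alpha)$ and so $|\sigma^{kn}(a)|\sim c\,(kn)^D\alpha^{kn}$. Transporting this through $\gamma$ as above forces some letter of $\gamma(a)\subseteq B''$ to grow under $\tau$ at rate exactly $\alpha^k$, whence $\Theta(\tau|_{B''})=\alpha^k$. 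This verifies condition~\eqref{cond2sub} and shows that $f(x)=\psi|_{B''}\bigl((\tau|_{B''})^\omega(b)\bigr)$ is $\alpha^k$-substitutive with respect to the non-erasing substitution $\tau|_{B''}$.

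The main obstacle is this last eigenvalue computation, and within it the reverse inequality $\Theta(\tau|_{B''})\ge\alpha^k$: one must rule out that passing through $\gamma$ and $\tau$ strictly lowers the exponential growth rate. This is exactly where condition~\eqref{cond1sub} is indispensable, since it guarantees that the prolongation letter $a$ carries the maximal growth type $(D,\alpha)$ rather than a smaller one. The conclusion can also be phrased purely in terms of incidence matrices, via the induced relation $M_\gamma M_\sigma^k=M_\tau M_\gamma$ together with Perron--Frobenius theory for non-negative matrices---the ``classical theory'' alluded to in the statement---but one has to be careful because $M_\sigma$ need not be irreducible, which is precisely why routing the estimate through the growth-type Proposition~\ref{prop:croissance} is the more economical route.
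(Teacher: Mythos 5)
Your proof is correct, and it follows the paper's skeleton---everything is funnelled through Theorem~\ref{theo:Cassaigne&Nicolas}, after which the only real content is showing that the dominating eigenvalue of $\tau$ is $\alpha^k$---but you carry out that last step by a genuinely different method. The paper stays entirely at the level of incidence matrices: for the inequality $\geq\alpha^k$ it pushes a non-negative right eigenvector $v$ of $M_\sigma$ through $M_\gamma$, using $M_\psi M_\gamma=M_\phi$ and the non-erasingness of $\phi$ to see that $M_\gamma v\neq 0$, so that $\alpha^k$ is an eigenvalue of $M_\tau$; for $\leq\alpha^k$ it pulls a non-negative left Perron eigenvector of $M_\tau$ back through $M_\psi$ (possible because $\psi$ is a coding, hence surjective on letters) to produce a non-negative left eigenvector of $M_\sigma^k$, which can only belong to an eigenvalue at most the spectral radius $\alpha^k$. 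You instead read the eigenvalue off the asymptotics of word lengths via Proposition~\ref{prop:croissance}, using $|\tau^n(\gamma(a'))|=|\gamma(\sigma^{kn}(a'))|\asymp|\sigma^{kn}(a')|$ once $\gamma$ is known to be non-erasing---a fact you correctly extract from $\psi\circ\gamma=\Phi$ with $\psi$ letter-to-letter, and which the paper never makes explicit. Your route is longer but more transparent about where condition~\eqref{cond1sub} of Definition~\ref{defi:alphasub} enters (it places a letter of maximal growth type inside some $\sigma^j(a)$, which yields the lower bound), and it additionally tidies up a point the paper glosses over, namely restricting $\tau$ and $\psi$ to the letters actually occurring in $\tau^\omega(b)$ so that the resulting substitution itself satisfies condition~\eqref{cond1sub}. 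The only step to flag is your identification of $B''$ with $\bigcup_{a'\in A}\{\text{letters of }\gamma(a')\}$, which rests on $\gamma(\sigma^\omega(a))=\tau^\omega(b)$; this is not literally among the stated conclusions of Theorem~\ref{theo:Cassaigne&Nicolas}, though it does follow from the construction behind it, and the paper's own matrix argument leans on the same unstated structure, so this is a shared rather than a new debt.
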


\begin{proof}
Let $\sigma : A^*\to A^*$ be a substitution prolongable on the letter $a$ and $\alpha$ the dominating eigenvalue of its incidence matrix.
Let $y = \sigma^\omega (a)$ and $\phi : A^* \to C^*$ be a non-erasing morphism.
We can suppose all letters of $C$ appear in some words of $\phi (A)$.
It suffices to show that $\phi (y)$ is $\alpha^k$-substitutive for some $k$.

From Theorem \ref{theo:Cassaigne&Nicolas} we can suppose $\phi (y)= \psi (\tau^\omega (b))$ where $\psi$ and $\tau$ satisfy (1), (2) and (3) of this theorem.
It suffices to show that the dominating eigenvalue of the incidence matrix $M_\tau $ of $\tau$ is $\alpha^k$ (where $k$ comes from (3) of Theorem \ref{theo:Cassaigne&Nicolas}).
Let $M_\phi$, $M_\psi$ and $M_\sigma$ be the incidence matrices of the corresponding morphisms.
Let $v$ be an eigenvector of $M_\sigma$ for the eigenvalue $\alpha$.
Then, $\alpha^k M_\gamma v = M_\tau M_\gamma v$.
As $\phi$ is non-erasing and $M_\psi M_\gamma = M_\phi $, $M_\gamma v$ is a non-zero vector.
Hence it is an eigenvector of $M_\tau$ for the eigenvalue $\alpha^k$.

Conversely, let $\beta$ be the dominating eigenvalue of $M_\tau$ and $w$ be a corresponding left eigenvector.
Then, $w M_\gamma M_\sigma^k = \beta w M_\gamma $.
As $\psi$ is a coding, there exists $w'$ with non-negative coordinates such that $w=w'M_\psi$.
Thus, $w' M_\phi M_\sigma^k = \beta w' M_\phi $.
All letters of $C$ appearing in some words of $\phi (A)$, $w' M_\phi$ is a non-negative vector different from $0$.
This concludes the proof. 
\end{proof}

Once Theorem \ref{theo:main} will be proven, it will show that, in this corollary, the non-erasing assumption cannot be removed. 
For example, we can consider $\sigma$ defined by $\sigma (a)= ab$, $\sigma (b)=bac$ and $\sigma (c) = ccc$, and, the (erasing) morphism $\phi$ defined by $\phi (a) = a$, $\phi (b) = b$ and $\phi (c)$ is the empty word. 
Then, $\sigma^\omega (a)$ is $3$-substitutive while $\phi (\sigma^\omega (a))$ is a non ultimately periodic $2$-substitutive sequence.

\section{Proof of Theorem \ref{theo:main}}
\label{sec:maintheo}
\subsection{Already known results used to prove the conjecture}

The proofs of most of the generalizations of Cobham's theorem are
divided into two parts.
\begin{itemize}
  \item[\textup{(i)}] Dealing with a subset $X$ of integers, we have
    to prove that $X$ is syndetic. Equivalently, dealing with an
    infinite sequence $x$, we have to prove that the letters occurring
    infinitely many times in $x$ appear with bounded gaps.
  \item[\textup{(ii)}] In the second part of the proof, the ultimate
    periodicity of $X$ or $x$ has to be carried on.
\end{itemize}

The first part is already known in the framework of substitutions.

\begin{theo}
\label{the:boundedgap}
\cite[Theorem 17]{Durand&Rigo:2009}
Let $\alpha , \beta \in ] 1,+\infty [$ be two multiplicatively independent real numbers. 
If a sequence $x$ is both $\alpha$-substitutive and $\beta$-substitutive then the words having
infinitely many occurrences in $x$ appear in $x$ with bounded gaps.
\end{theo}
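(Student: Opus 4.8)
The plan is to argue by contraposition: assume some recurrent word $u$ occurs in $x$ with unbounded gaps and derive a contradiction with the multiplicative independence of $\alpha$ and $\beta$. First I would normalise the data. By Theorem \ref{theo:Cassaigne&Nicolas} together with (the proof of) Corollary \ref{coro:nonerasing}, I may assume that $x$ arises by a coding from a \emph{non-erasing} substitution $\sigma$ whose dominating eigenvalue is a power of $\alpha$, and simultaneously from a non-erasing substitution $\tau$ whose dominating eigenvalue is a power of $\beta$; passing to powers is harmless since $\alpha^{k}$ and $\beta^{\ell}$ remain multiplicatively independent. Replacing $x$ by its sequence of overlapping factors of length $|u|$ (the standard higher-block/sliding-block recoding, which preserves both $\alpha$- and $\beta$-substitutivity and the dominating eigenvalues), I may also assume that $u$ is a single letter $e$.

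The combinatorial heart is to translate ``unbounded gaps'' into a statement about the orbit closure $\Omega_x=\overline{\{S^n x\mid n\in\NN\}}$ of $x$ under the shift $S$. If $e$ occurs with unbounded gaps, then $x$ has arbitrarily long factors avoiding $e$, so by compactness $\Omega_x$ contains a point, and hence a minimal subshift $\M$, whose language avoids $e$ entirely; meanwhile $e\in\mathcal{L}_{\rm rec}(x)$. The crucial feature is that $\M$ is intrinsic to $\Omega_x$ and is therefore visible from \emph{both} substitutive presentations at once.

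I would then apply the structure theory of (possibly non-primitive) substitutions. Restricting $\sigma$ to a terminal strongly connected component of its letter graph produces a primitive substitution whose subshift is exactly $\M$, and whose dominating eigenvalue is one of the growth rates $\theta$ furnished by Proposition \ref{prop:croissance}, with $\theta\le\Theta$. Reading $\M$ from $\sigma$ thus exhibits it as a primitive $\theta_\alpha$-substitutive subshift, and reading it from $\tau$ exhibits it as a primitive $\theta_\beta$-substitutive subshift. At this point I would invoke the already-known Cobham theorem for primitive substitutions (the partial result of \cite{Durand:1998b}): a minimal subshift that is primitively $\theta_\alpha$- and $\theta_\beta$-substitutive is either periodic, or else $\theta_\alpha$ and $\theta_\beta$ are multiplicatively dependent.

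Finally I would convert this dichotomy into the contradiction. The main obstacle, which is where the genuine number-theoretic content of Cobham's theorem resides, is that the eigenvalue $\theta$ of a minimal component need not be multiplicatively comparable to the ambient dominating eigenvalue (for instance a Fibonacci component of rate $\tfrac{1+\sqrt5}{2}$ can sit inside a substitution of dominating eigenvalue $2$), so one cannot read multiplicative independence of $\alpha,\beta$ off $\theta_\alpha,\theta_\beta$ directly. Instead I would track \emph{where} the $e$-free deserts carved out by $\M$ sit inside $x$: since $\M$ is uniquely ergodic, the frequencies and the positions of these deserts are well defined, and reading them in the $\alpha$-numeration attached to $\sigma$ and in the $\beta$-numeration attached to $\tau$ should force the multiplicative dependence upward to $\alpha$ and $\beta$, contradicting the hypothesis. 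I also expect to dispose of the periodic case for $\M$ separately, by showing that a periodic minimal component of an $\alpha$-substitutive sequence with $\alpha>1$ can occur only with runs of bounded length and hence cannot on its own create unbounded gaps.
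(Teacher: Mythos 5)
First, note that the paper does not prove this statement at all: it is imported verbatim as \cite[Theorem 17]{Durand&Rigo:2009}, so there is no internal proof to compare yours against. Judged on its own terms, your proposal has two genuine gaps. The decisive one is the final step, which is exactly where the multiplicative independence of $\alpha$ and $\beta$ must do its work: the sentence saying that reading the $e$-free deserts in the two numerations ``should force the multiplicative dependence upward to $\alpha$ and $\beta$'' is a hope, not an argument. You correctly identify that the growth rate $\theta$ of a minimal component need not be multiplicatively tied to the ambient dominating eigenvalue, but you then offer no mechanism to bridge that. The known proofs of this syndeticity statement (in \cite{Durand:1998c,Durand&Rigo:2009}) work quite differently: they use the density of $\{\alpha^{n}\beta^{-m} : n,m\in\NN\}$ in $]0,+\infty[$ (a consequence of Kronecker's theorem applied to $\log\alpha/\log\beta\notin\mathbb{Q}$) to show that if arbitrarily long $u$-free windows exist, the two substitutive structures propagate such windows to every scale and every position, contradicting the recurrence of $u$. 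No density or equidistribution argument appears anywhere in your outline, so the independence hypothesis is never actually consumed.

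The second gap is that your proposed disposal of the periodic case is false as stated. You claim that a periodic minimal component of an $\alpha$-substitutive sequence with $\alpha>1$ can only produce runs of bounded length; but take $\sigma(0)=010$, $\sigma(1)=11$: the fixed point $\sigma^{\omega}(0)$ is $3$-substitutive, contains the runs $1^{2^{n}}$ for all $n$, and the recurrent letter $0$ occurs with unbounded gaps. This does not contradict the theorem (there is no second, multiplicatively independent presentation), but it shows that no argument using only one of the two substitutive structures can rule out unbounded gaps --- periodic or not, the minimal subshift $\M$ sitting in the deserts is perfectly compatible with a single substitutive presentation. Relatedly, your step identifying $\M$ with the subshift of a primitive sub-substitution of $\sigma$ needs justification (a minimal subshift of $\Omega_x$ obtained by compactness is not automatically the subshift of a terminal strongly connected component of the letter graph), though that is repairable; the two points above are not, without importing the density argument that constitutes the real content of the theorem.
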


The second part is proved in the context of ``good'' substitutions in \cite{Durand:2002a} and was proven previously in many others (see \cite{Durand&Rigo:xx} for a survey). 

\begin{defi}
    Let $\sigma : A^*\rightarrow A^*$ be a substitution whose
    dominating eigenvalue is $\alpha$. If there exists $B\subseteq A$
    such that for all $b\in B$, $\sigma(b)\in B^*$, then the
    substitution $\tau:B^*\to B^*$ defined by $\tau(b)=\sigma(b)$, for
    all $b\in B$, is a {\em
      sub-substitution}
    of $\sigma$. The substitution $\sigma$ is a {\em ``good''
      substitution} if it is growing and has a primitive sub-substitution whose
    dominating eigenvalue is $\alpha$.  
\end{defi}

Not all non-erasing substitutions have a primitive sub-substitution, for example $0\mapsto 010$, $1\mapsto 2$ and $2\mapsto 1$. But all have a power that has at least a primitive sub-substitution (\cite{Durand:2002a}).
But, even up to some power, some are not ``good''.
For example the substitution $\sigma$ defined by $0\mapsto 0100$, $1\mapsto 12$ and $2\mapsto 21$ has a unique primitive sub-substitution ($\tau : 1\mapsto 12$ and $2\mapsto 21$) but is not ``good'' because the dominant eigenvalue of $\sigma$ is $3$ and it is $2$ for $\tau$.

We recall that in the (non-erasing) purely substitutive context the expected extension of Cobham's theorem is known.

\begin{theo}
\cite[Corollary 19]{Durand:2002a}
\label{theo:fixpoint}
Let $\sigma : A^* \rightarrow A^*$ and $\tau : A^* \rightarrow A^*$ be
two non-erasing growing substitutions prolongable on $a\in A$ with
respective dominating eigenvalues $\alpha$ and $\beta$.  
Suppose that
all letters of $A$ appear in $\sigma^{\omega} (a)$ and in $\tau^\omega
(a)$ and that $\alpha$ and $\beta$ are multiplicatively independent.
If $x=\sigma^{\omega} (a) = \tau^\omega (a)$, then $x$ is ultimately
periodic.
\end{theo}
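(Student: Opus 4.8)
The plan is to follow the two-part strategy recalled above. Part 1 is free: since $x=\sigma^\omega(a)=\tau^\omega(a)$ is at once $\alpha$-substitutive and $\beta$-substitutive with $\alpha,\beta$ multiplicatively independent, Theorem~\ref{the:boundedgap} gives that every word with infinitely many occurrences in $x$ occurs with bounded gaps. The letters occurring only finitely often fill a finite prefix of $x$; since ultimate periodicity depends only on the tail, I discard this prefix and assume from now on that every letter of $A$ occurs infinitely often, hence syndetically. The task is then to promote this syndeticity to ultimate periodicity.

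For Part 2 I would first reduce to a uniformly recurrent situation. Choosing a minimal subshift of $\Omega_x=\overline{\{S^nx:n\in\mathbb{N}\}}$ and a point $z$ in it, I obtain a uniformly recurrent sequence with $\mathcal{L}(z)\subseteq\mathcal{L}(x)$, and the goal becomes to show that $z$ is periodic. To analyse $z$ I would use the return-word machinery: for a prefix $u$ of $z$ the set of return words $\mathcal{R}_u$ is finite by uniform recurrence, $z$ is a concatenation of return words, and this produces a derived sequence $\mathcal{D}_u(z)$ over $\mathcal{R}_u$. Exploiting the self-similarities inherited from $\sigma$ and from $\tau$ (through recognizability of the iterates on the growing part), one seeks to show that along suitable families of prefixes the derived sequences take only finitely many values, so that $z$ carries two primitive substitutive structures. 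Combined with the length asymptotics $|\sigma^n(a)|\sim c\,n^{D}\alpha^n$ and $|\tau^m(a)|\sim c'\,m^{D'}\beta^m$ furnished by Proposition~\ref{prop:croissance}, the same prefixes of $z$ are then realized at two incompatible geometric scales; multiplicative independence (so that $|n\log\alpha-m\log\beta|$ cannot stay bounded while $n,m\to\infty$) prevents both self-similar descriptions from persisting unless there is essentially a single return word, that is, unless $z$ is periodic.

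It then remains to transfer periodicity from the minimal core $z$ to $x$ itself. If $z$ is periodic of period $p$ with period word $u$, then $u^N\in\mathcal{L}(z)\subseteq\mathcal{L}(x)$ for every $N$, and each $u^N$ occurs infinitely often in $x$ (arbitrarily large powers of $u$ belong to $\mathcal{L}(x)$), hence with bounded gaps by Part 1. Fixing $N$ so large that $|u^N|=Np$ exceeds this gap bound, two consecutive occurrences of $u^N$ overlap on more than $Np-|u^N|$ letters, and a Fine--Wilf overlap argument forces $x$ to be $p$-periodic from that point on; thus $x$ is ultimately periodic.

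I expect the main obstacle to be the reduction performed inside Part 2, precisely the step equipping $z$ with two substitutive structures governed by powers of $\alpha$ and of $\beta$. The substitutions $\sigma$ and $\tau$ are only assumed growing, and, as the examples preceding this statement show, they need not be ``good'' even after passing to powers, so a naive primitive sub-substitution may carry a dominating eigenvalue strictly smaller than $\alpha$ (respectively $\beta$) and thereby lose multiplicative independence. The delicate point is to argue that, despite non-primitivity and possible non-uniform-recurrence, the doubly-substitutive hypothesis together with the bounded-gap property isolates a single primitive core whose two self-similar descriptions are genuinely controlled by powers of $\alpha$ and $\beta$; pinning these eigenvalues down is where the growth-type analysis of Proposition~\ref{prop:croissance} and the non-erasing desubstitution of Theorem~\ref{theo:Cassaigne&Nicolas} must be used with care.
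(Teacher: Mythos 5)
First, a point of comparison: the paper does not actually prove this statement --- it imports it verbatim from \cite[Corollary 19]{Durand:2002a} and uses it as a black box --- so your attempt must be judged as a reconstruction of that cited proof, using the machinery this paper does develop. Judged that way, it has a genuine gap at exactly the decisive point. Your Part 2 hinges on showing that the minimal core $z$ carries two primitive substitutive structures whose dominating eigenvalues are powers of $\alpha$ and of $\beta$, so that multiplicative independence can bite. You never give a mechanism for this; you write that ``one seeks to show'' the derived sequences take finitely many values, and you yourself name the obstruction that makes the naive version false: for substitutions that are not ``good'' the primitive sub-substitution can have a strictly smaller dominating eigenvalue (the paper's own example $0\mapsto 0100$, $1\mapsto 12$, $2\mapsto 21$ has $\alpha=3$ but core eigenvalue $2$), so independence of $\alpha$ and $\beta$ need not descend to the eigenvalues actually governing $z$. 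Overcoming this eigenvalue drop is the entire content of the theorem, and in this paper's more general setting it is what Lemma~\ref{lemma:finallemma} is for: there the resolution is not a derived-sequence/self-similarity argument at all, but a counting argument --- one exhibits, from the coexistence of two growth rates, a family of distinct recurrent words $uQ_s\phi(U(m_i,k))$ in $\mathcal{R}_y(u)^*$, and contradicts the upper bound on $\#\bigl(\mathcal{R}_y(u)^*\cap\mathcal{L}_n(y)\bigr)$ supplied by linear recurrence via Lemmas~\ref{lemma:lr} and~\ref{lemma:concatenationrw}. Your proposal identifies the obstacle but supplies no substitute for this step, so it is a plan rather than a proof.

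There is a second, more local flaw: the transfer of periodicity from $z$ back to $x$. Theorem~\ref{the:boundedgap} gives, for each word recurrent in $x$, a gap bound depending on that word; there is no uniform bound over the family $u^N$, so ``fixing $N$ so large that $Np$ exceeds this gap bound'' is circular, and the Fine--Wilf overlap argument never gets started. The correct route is the one the paper makes explicit in Lemma~\ref{lemma:classicalarguments}: take $u$ primitive (not a proper power), note that each \emph{fixed} recurrent word $wu$ (with $w$ one of the finitely many return words to $u$) occurs with bounded gaps while arbitrarily long powers $u^m$ occur recurrently, which forces $wu$ to sit inside a power of $u$; the classical $pp'=p'p$ factorization argument then yields $w=u$, so the tail of $x$ is $u^\omega$. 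Since that lemma is stated and proved in the paper, this part of your argument is repairable, but as written it does not work; the central gap of the previous paragraph is the one that would require the real content of \cite[Corollary 19]{Durand:2002a} to fill.
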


\subsection{Concatenation of return words}

Let $A$ be a finite alphabet.
Let $x\in A^\mathbb{N}$ and $u\in \mathcal{L}(x)$. 
A {\it return word to $u$ (for $x$)} is a word $w$ such that $wu$ belongs to $\mathcal{L}(x)$, $u$ is a prefix of $wu$ and $u$ has exactly two occurrences in $wu$. 
The set of return words is denoted by $\mathcal{R}_x (u)$.
A sequence $x\in A^\mathbb{N}$ is {\it linearly recurrent (with constant $L$)} if $x$ is uniformly recurrent and for all $w\in \mathcal{R}_x (u)$ we have 
$|w| \leq  L |u|$.

\begin{lemma}
\label{lemma:lr}
\cite[Theorem 24]{Durand&Host&Skau:1999}
Let $x\in A^\mathbb{N}$ be a non-periodic linearly recurrent sequence (with constant $L$).
Then, 
\begin{enumerate}
\item
for all $w\in \mathcal{R}_x (u)$ we have 
$ \frac{|u|}{L} \leq |w|$;
\item
there exists a constant $K$ such that for all $u\in \mathcal{L}(x)$ we have $\# \mathcal{R}_x (u)\leq K$. 
\end{enumerate}
\end{lemma}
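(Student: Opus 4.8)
The plan is to establish the two assertions in turn, the first being the genuinely hard one and the second following from it by a frequency argument. Throughout, write $f(v)$ for the frequency of a factor $v$ in $x$; since a linearly recurrent sequence is uniquely ergodic, these frequencies exist (and the same bounds can be obtained directly, without measures, from the defining inequality). The only inputs I will use are the defining upper bound — every $w\in\mathcal{R}_x(u)$ has $|w|\le L|u|$ — together with uniform recurrence (minimality) and non-periodicity.

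For assertion (1), first I would dispose of the case $|w|\ge|u|$, which gives $|w|\ge|u|\ge|u|/L$ for free. So suppose $|w|<|u|$. Since $u$ is both a prefix and a suffix of $wu$ and its only two occurrences there are at positions $0$ and $|w|$, these two occurrences overlap; a standard combinatorics-on-words computation then forces $u$ to have period $|w|$, with $w$ a prefix of $u$. Equivalently, $u$ is a prefix of $w^{\omega}$, so $wu$ contains the power $w^{q}$ with $q\approx |u|/|w|$. The inequality $|u|\le L|w|$ is therefore equivalent to the statement that the exponents of the powers occurring in $x$ are bounded by (roughly) $L$. This bounded-powers statement is the main obstacle. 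Its qualitative form — that $x$ contains no arbitrarily long powers — I would get from minimality and non-periodicity by compactness: a sequence of longer and longer occurrences of $z^{n}$, with $z$ the primitive root of $w$, would, after shifting, converge to a point of the subshift carrying an infinite $z$-run, whose orbit closure contains a periodic point, contradicting that the subshift is minimal and infinite. Turning this into the explicit bound $|u|\le L|w|$, i.e. pinning the exponent to $L$, is where the defining inequality must be fed back into the period structure; this quantitative step is the delicate part.

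Granting (1), assertion (2) is a clean counting argument. First, the upper bound alone yields, for every factor $v$, the lower frequency estimate $f(v)\ge 1/(L|v|)$: consecutive occurrences of $v$ lie at distance at most $L|v|$, so $v$ meets every window of that length. Next I use that the return words to $u$ tile $x$: writing $g_w$ for the frequency of the return word $w$, one has the two identities
$$
\sum_{w\in\mathcal{R}_x(u)} g_w = f(u), \qquad \sum_{w\in\mathcal{R}_x(u)} g_w\,|w| = 1 .
$$
From (1), $|w|\ge|u|/L$ for every return word, so the second identity gives $f(u)\le L/|u|$. On the other hand, each occurrence of the factor $wu$ forces the return word following that occurrence of $u$ to be exactly $w$, whence $g_w\ge f(wu)\ge 1/(L|wu|)\ge 1/\big(L(L+1)|u|\big)$, using $|wu|\le (L+1)|u|$. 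Combining,
$$
\#\mathcal{R}_x(u)\cdot\frac{1}{L(L+1)|u|}\le\sum_{w} g_w = f(u)\le\frac{L}{|u|},
$$
so $\#\mathcal{R}_x(u)\le L^{2}(L+1)$, a bound $K$ depending only on $L$ and not on $u$, as required.

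In summary, the technical heart is the bounded-powers estimate underlying (1); once the two-sided length control $|u|/L\le|w|\le L|u|$ is available, the uniform bound on the number of return words drops out of the tiling identities and the elementary frequency estimate $f(v)\ge 1/(L|v|)$.
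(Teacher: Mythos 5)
First, note that the paper does not prove this lemma at all: it is imported verbatim as \cite[Theorem 24]{Durand&Host&Skau:1999}, so there is no internal proof to compare against; your attempt has to be judged on its own.

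There is a genuine gap, and it sits exactly where you flag it: the quantitative heart of assertion (1). You correctly reduce the case $|w|<|u|$ to the observation that $wu$ has period $|w|$ and hence is a prefix of $w^{\omega}$ of length $|w|+|u|$, so that $|w|<|u|/L$ would force $w^{L+1}\in\mathcal{L}(x)$. But you then only offer a qualitative compactness argument (``no arbitrarily long powers'') and explicitly defer the step that pins the exponent to $L$. That deferral is fatal twice over: (a) the compactness argument does not even give the qualitative statement uniformly in the base word $z$ --- if $z_n^{\,n}\in\mathcal{L}(x)$ with $|z_n|\to\infty$, the shifted limits need not converge to a periodic point, so minimality is not contradicted; and (b) even a uniform qualitative bound would not yield the constant $L$ demanded by the statement. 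The missing argument is short and should be supplied: if $v^{L+1}\in\mathcal{L}(x)$ with $v$ non-empty, then since every factor of $x$ of length $|v|$ recurs with gaps at most $L|v|$, every factor of $x$ of length $(L+1)|v|$ contains \emph{every} factor of length $|v|$; but $v^{L+1}$ has at most $|v|$ distinct factors of length $|v|$, so the complexity satisfies $p_x(|v|)\le |v|$, and Morse--Hedlund forces $x$ to be ultimately periodic, hence (by uniform recurrence) periodic --- contradicting non-periodicity. With this $(L+1)$-power-freeness in hand, your period argument immediately gives $|w|\ge |u|/L$.

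Assertion (2) as you present it is sound once (1) is secured: the tiling identities $\sum_w g_w=f(u)$ and $\sum_w g_w|w|=1$, the lower bound $g_w\ge f(wu)\ge 1/\bigl(L(L+1)|u|\bigr)$, and the upper bound $f(u)\le L/|u|$ combine correctly to give $\#\mathcal{R}_x(u)\le L^2(L+1)$, which is the shape of bound the paper needs (only the existence of some $K=K(L)$ is used later, in (P2)--(P4)). One cosmetic caution: rather than invoking unique ergodicity for the existence of the frequencies, it is cleaner to run the same inequalities with lower and upper densities, which the bounded-gap hypothesis controls directly; as you note, this changes nothing essential.
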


The following lemma will be, in some sense, the last decisive argument to prove Theorem \ref{theo:main}.
When $W$ is a set of words, $W^*$ stands for the set of all concatenations of elements of $W$.

\begin{lemma}
\label{lemma:concatenationrw}
Let $x$ be a non-periodic linearly recurrent sequence (with constant $L$).
Then, there exists a constant $K$ such that for all $u\in \mathcal{L}(x)$ and all $l\in \mathbb{N}$, 

$$
\# \cup_{0\leq n \leq l} 
\left( 
\mathcal{R}_x(u)^* \cap \mathcal{L}_n(x)
\right)
\leq \left( 1+ K \right)^{\frac{lL}{|u|}} .
$$
\end{lemma}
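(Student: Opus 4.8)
The plan is to bound the number of distinct words in question by the number of their factorizations into return words, and then to control that number by a crude length argument together with the bound on the number of return words supplied by Lemma \ref{lemma:lr}. Let $K$ be the constant given by part (2) of Lemma \ref{lemma:lr}, so that $\#\mathcal{R}_x(u) \leq K$ for every $u \in \mathcal{L}(x)$; I claim this same $K$ works in the statement. Fix $u$ and $l$. Every word $w$ belonging to $\cup_{0 \leq n \leq l}\left(\mathcal{R}_x(u)^* \cap \mathcal{L}_n(x)\right)$ admits at least one factorization $w = w_1 w_2 \cdots w_m$ with each $w_i \in \mathcal{R}_x(u)$. Choosing one such factorization for each $w$ yields an injection from the set we wish to count into the set of finite tuples of return words of total length at most $l$, so it suffices to bound the cardinality of the latter.

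The key quantitative input is the lower bound on the length of return words. Since $x$ is non-periodic and linearly recurrent, part (1) of Lemma \ref{lemma:lr} gives $|w_i| \geq |u|/L$ for every return word $w_i$. Consequently, if $w_1 \cdots w_m$ has length at most $l$, then $l \geq \sum_{i=1}^m |w_i| \geq m|u|/L$, whence $m \leq lL/|u|$. Thus every factorization we count uses at most $M := \lfloor lL/|u| \rfloor$ return words.

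It then remains to count the tuples. For each fixed number $m$ of factors there are at most $(\#\mathcal{R}_x(u))^m \leq K^m$ tuples, so the total number is at most $\sum_{m=0}^{M} K^m$. The binomial theorem gives $(1+K)^M = \sum_{m=0}^M \binom{M}{m} K^m \geq \sum_{m=0}^M K^m$, since every binomial coefficient is at least $1$; and $(1+K)^M \leq (1+K)^{lL/|u|}$ because $M \leq lL/|u|$ and $1+K \geq 1$. Chaining these inequalities produces the announced bound.

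There is no serious obstacle here: the argument is an elementary counting estimate, and the only points requiring care are (a) reducing the count of distinct words to a count of factorizations, where overcounting is harmless since we only seek an upper bound, and (b) recognizing that the geometric sum $\sum_{m \leq M} K^m$ collapses into the clean exponential form $(1+K)^{lL/|u|}$ via the binomial inequality. The role of the hypotheses is transparent: linear recurrence provides both the uniform bound $K$ on the number of return words and the lower bound $|u|/L$ on their lengths, the latter being exactly what forces the number of factors, and hence the exponent, to grow only linearly in $l$.
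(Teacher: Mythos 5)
Your proof is correct and follows essentially the same route as the paper: both arguments use the lower bound $|w_i|\ge |u|/L$ on return word lengths to cap the number of factors at $lL/|u|$, then count factorizations over the at most $K$ return words. The only cosmetic difference is that the paper pads each factorization with empty words to get exactly $\lfloor lL/|u|\rfloor$ factors from $\mathcal{R}_x(u)\cup\{\epsilon\}$, where you instead sum the geometric series and absorb it via the binomial inequality.
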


\begin{proof}
Let $K$ be the constant given by Lemma \ref{lemma:lr}.
Let $l,n\in \mathbb{N}$ with $n\leq l$, $u\in \mathcal{L} (x)$ and $w\in \mathcal{R}_x(u)^* \cap \mathcal{L}_n(x)$.
As the distance between two occurrences of $u$ in $w$ is at least $|u|/L$,
in $w$ there are at most $Ll/|u|$ occurrences of $u$.
Hence $w$ is a concatenation of exactly $Ll/|u|$ words belonging to $\mathcal{R}_x (u) \cup \{\epsilon \}$.
This concludes the proof.
\end{proof}

The following proposition allows us to apply this lemma to primitive substitutive sequences.

\begin{theo}
\label{prop:subprimlr}
\cite[Theorem 4.5]{Durand:1998a}
All primitive substitutive sequences are linearly recurrent.
\end{theo}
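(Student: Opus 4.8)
The plan is to reduce first to the purely substitutive case and then to prove the estimate directly for the fixed point of a primitive substitution, using Perron--Frobenius length control together with a recurrence-of-blocks argument.

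\textbf{Reduction to fixed points.} By Definition \ref{defi:alphasub} a primitive substitutive sequence is $y=\phi(x)$, where $\phi$ is a coding and $x=\sigma^\omega(a)$ is the fixed point of a primitive substitution $\sigma$. First I would check that linear recurrence passes through a coding. Since $\phi$ preserves lengths, every $u\in\mathcal{L}(y)$ equals $\phi(v)$ for some $v\in\mathcal{L}(x)$ with $|v|=|u|$ (lift any occurrence of $u$), and every occurrence of $v$ in $x$ is an occurrence of $u$ in $y$, so the occurrence set of $v$ is contained in that of $u$. Adding occurrences to a set can only shrink the maximal gap between consecutive ones, hence the longest return word to $u$ in $y$ is at most the longest return word to $v$ in $x$. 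Thus, if $x$ is linearly recurrent with constant $L$, then so is $y$ with the same constant, uniform recurrence of $y$ being the special case of this bound. It therefore suffices to treat $x=\sigma^\omega(a)$.

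\textbf{Length estimates and block decomposition.} Let $\alpha>1$ be the dominating eigenvalue of $M_\sigma$. By Perron--Frobenius (this is where primitivity is essential) there are constants $0<c_1\le c_2$ with $c_1\alpha^n\le|\sigma^n(b)|\le c_2\alpha^n$ for all $b\in A$ and $n\ge 0$, so all blocks at a fixed level have comparable lengths. Primitivity also gives an integer $p$ with every letter occurring in $\sigma^p(b)$ for each $b$, whence $x$ is uniformly recurrent and each of the finitely many length-$2$ factors of $x$ recurs with a gap bounded by some $R$ depending only on $\sigma$. Fixing $u\in\mathcal{L}(x)$, I would choose $m$ minimal with $\min_b|\sigma^m(b)|>|u|$; then $\min_b|\sigma^{m-1}(b)|\le|u|$ forces $\max_b|\sigma^m(b)|\le C|u|$ with $C=c_2\alpha/c_1$ independent of $u$. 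The identity $x=\sigma^m(x)=\sigma^m(x_0)\sigma^m(x_1)\cdots$ then presents $x$ as a concatenation of blocks $B_i=\sigma^m(x_i)$, each of length in $(|u|,C|u|]$.

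\textbf{Reoccurrence of $u$.} Since every block is longer than $u$, any occurrence of $u$ lies inside the two consecutive blocks $B_iB_{i+1}$, where $B_i$ contains its first letter, and is thus determined by the pair $x_ix_{i+1}$ and the offset inside $B_i$. The factor $x_ix_{i+1}$ reoccurs within $R$ indices, say at $i'$ with $i<i'\le i+R$; as $B_k=\sigma^m(x_k)$ depends only on $x_k$, one gets $B_{i'}B_{i'+1}=B_iB_{i+1}$, so the same offset yields a fresh occurrence of $u$. The distance between the two occurrences is $\sum_{i\le k<i'}|B_k|\le(i'-i)\,C|u|\le RC|u|$. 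Hence every occurrence of $u$ is followed by another within $RC|u|$ symbols, so each $w\in\mathcal{R}_x(u)$ satisfies $|w|\le L|u|$ with $L=RC$ independent of $u$. This shows $x$, and therefore $y$, is linearly recurrent.

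\textbf{Main difficulty.} The whole scheme succeeds only if every constant is kept independent of $u$. The delicate points are the two-sided bound $c_1\alpha^n\le|\sigma^n(b)|\le c_2\alpha^n$, which guarantees that all level-$m$ blocks have comparable lengths, and the uniform recurrence bound $R$ for length-$2$ factors; both rest essentially on primitivity and would break down for an arbitrary growing substitution, which is exactly why the primitivity hypothesis cannot be dropped here.
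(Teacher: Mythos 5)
The paper gives no proof of this statement, citing it from \cite[Theorem 4.5]{Durand:1998a}; your argument is correct and is essentially the standard proof from that reference: Perron--Frobenius two-sided bounds on $|\sigma^n(b)|$, a level-$m$ block decomposition with blocks of length comparable to $|u|$, and bounded-gap recurrence of the two-letter factors to reproduce $u$ within $RC|u|$ symbols, followed by the (correct) observation that linear recurrence is preserved under a coding. No gaps.
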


Note that it is explained in \cite{Durand:1996} through the Chacon example that uniformly recurrent substitutive sequences are primitive substitutive. In \cite{Durand:2010} it is explained the generality of the treatment of the Chacon substitution. 
The same result has been obtained in \cite{Damanik&Lenz:2006} but with a significant longer proof.

\subsection{Reduction of the problem}
\label{subsec:reduction}

Let us make two obvious but important remarks.
Let $\alpha , \beta >1$.
When a sequence is $\alpha$-substitutive with respect to a substitution $\sigma$, it is also $\alpha^k$-substitutive with respect to $\sigma^k$. 
Moreover, for all positive integers $l$ and $k$, $\alpha$ and $\beta$ are multiplicatively independent if and only if $\alpha^l$ and $\beta^k$ are multiplicatively independent.
Hence due to the statement we want to prove, we can suppose, if needed, that $\sigma$ has the properties some $\sigma^k$ would have (changing $\sigma$ by $\sigma^k$ if needed).
Hence we can always suppose that $\sigma $ fulfills:

\begin{enumerate}
\item
\label{enum:nonerasing}
$\sigma $ is non-erasing (Corollary \ref{coro:nonerasing});
\item
\label{enum:pansiot}
$\sigma$ is growing or there exists a growing letter $a\in A$ such that $\sigma (a) =
  vau$ (or $uav$) with $u\in B^*\setminus \{ \epsilon \}$ where $B$ is the set of non-growing letters (\cite[Th\'eor\`eme 4.1]{Pansiot:1984a});
\item
\label{enum:primitivesubsub} 
$\sigma$ has a primitive sub-substitution (see \cite{Durand:2002a}).
\end{enumerate}

\subsection{Technical lemmata and proof of Theorem \ref{theo:main}}

Before to prove the main result of this paper we need to establish some lemmata.

\begin{lemma}
\label{lemma:langreclr}
Let $x$ be a substitutive sequence with respect to a growing substitution.
If each word in $\mathcal{L}_{\rm rec} (x)$ appears with bounded gaps in $x$ then there exists a primitive substitutive sequence $y$ such that 
$\mathcal{L}_{\rm rec} (x)=\mathcal{L} (y)$.
\end{lemma}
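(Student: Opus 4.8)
The plan is to realize $\mathcal{L}_{\rm rec}(x)$ as the language of a minimal subshift and then to produce one primitive substitutive point inside that subshift. Write $x=\phi(u)$ with $u=\sigma^\omega(a)$ purely substitutive and $\phi$ a coding; since $\sigma$ is growing it is automatically non-erasing, and $\phi$, being letter-to-letter, preserves lengths and occurrences. A pigeonhole argument on the finitely many $\phi$-preimages of a given word then gives $\mathcal{L}_{\rm rec}(x)=\phi(\mathcal{L}_{\rm rec}(u))$, so it suffices to treat $\mathcal{L}_{\rm rec}(u)$ and transport the conclusion through $\phi$ at the end.

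First I would check that $\mathcal{L}_{\rm rec}(u)$ is the language of a minimal subshift. It is factorial, since any factor of a recurrent word is recurrent, and it is extendable on both sides, since a word with infinitely many occurrences over a finite alphabet admits, by pigeonhole, a one-letter extension with infinitely many occurrences. Uniform recurrence is where the hypothesis enters: if $v\in\mathcal{L}_{\rm rec}(u)$ has all gaps bounded by some $G_v$, then any recurrent word of length at least $G_v+|v|$, having an occurrence arbitrarily far to the right, must contain $v$. Hence $\mathcal{L}_{\rm rec}(u)$ is uniformly recurrent and defines a minimal subshift $\Omega$ with $\mathcal{L}(\Omega)=\mathcal{L}_{\rm rec}(u)$.

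Next I would make $\Omega$ substitutive. Let $R\subseteq A$ be the set of letters occurring infinitely often in $u$. As $u=\sigma(u)$ and $\sigma$ is non-erasing, each occurrence of $c\in R$ yields an occurrence of $\sigma(c)$ in $u$, so $\sigma(c)\in R^*$; thus $R$ is forward-closed and $\sigma_R:=\sigma|_R$ is a growing sub-substitution. The map sending $c$ to the first letter of $\sigma(c)$ is a self-map of the finite set $R$, so after replacing $\sigma$ by a suitable power (allowed by the reductions of Subsection \ref{subsec:reduction}) there is $c\in R$ on which $\sigma_R$ is prolongable. Set $z=\sigma_R^\omega(c)$. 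Every factor of $z$ lies in some $\sigma^n(c)$, which is recurrent because $c$ is, so $\mathcal{L}(z)\subseteq\mathcal{L}_{\rm rec}(u)=\mathcal{L}(\Omega)$ and $z\in\Omega$; minimality of $\Omega$ forces $\mathcal{L}(z)=\mathcal{L}_{\rm rec}(u)$. Since every letter of $R$ is recurrent it occurs in $z$, so $z$ is purely substitutive with respect to $\sigma_R$.

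Finally, $y:=\phi(z)$ is substitutive, its language is $\phi(\mathcal{L}_{\rm rec}(u))=\mathcal{L}_{\rm rec}(x)$, and it is uniformly recurrent because it lies in the minimal subshift carrying that language. By the fact recalled after Theorem \ref{prop:subprimlr}, that a uniformly recurrent substitutive sequence is primitive substitutive, $y$ is primitive substitutive, as required. I expect the main obstacle to be the middle step: identifying $\mathcal{L}_{\rm rec}(u)$ with the subshift of the recurrent-letter sub-substitution $\sigma_R$, and in particular arranging a prolongable recurrent letter (via a passage to a power) while preserving the equality of languages. The decisive input that turns mere substitutivity of $y$ into \emph{primitive} substitutivity is precisely the cited upgrade for uniformly recurrent sequences.
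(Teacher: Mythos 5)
There is a genuine gap at your reduction step. You move the problem from $x=\phi(u)$ to $u=\sigma^\omega(a)$ and then let ``the hypothesis enter'' by assuming that every $v\in\mathcal{L}_{\rm rec}(u)$ has bounded gaps \emph{in $u$}. But the hypothesis only bounds the gaps of the words of $\mathcal{L}_{\rm rec}(x)$ in $x$, and this property does not pull back through a coding: an occurrence of $\phi(v)$ in $x$ need not come from an occurrence of $v$ in $u$, only from an occurrence of some preimage of $\phi(v)$. Concretely, take $\sigma(a)=abc$, $\sigma(b)=bb$, $\sigma(c)=cc$, which is growing, so that $u=a\,b\,c\,b^2c^2\,b^4c^4\cdots$, and let $\phi$ send both $b$ and $c$ to a letter $0$. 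Then $x=a000\cdots$ satisfies the hypothesis of the lemma, yet $b$ and $c$ are recurrent in $u$ with unbounded gaps, $\mathcal{L}_{\rm rec}(u)$ contains every $b^n$ and every $c^n$ and is not uniformly recurrent, the associated subshift is not minimal, and your intermediate claim $\mathcal{L}(z)=\mathcal{L}_{\rm rec}(u)$ is false for $z=b^\omega$ (or $c^\omega$). So the sentence ``minimality of $\Omega$ forces $\mathcal{L}(z)=\mathcal{L}_{\rm rec}(u)$'' does not stand.

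The gap is local and repairable: run the minimality argument downstairs. Your factorial/extendable/bounded-gaps argument does show that $\mathcal{L}_{\rm rec}(x)$ is the language of a minimal subshift, and your $z$ satisfies $\mathcal{L}(\phi(z))=\phi(\mathcal{L}(z))\subseteq\phi(\mathcal{L}_{\rm rec}(u))\subseteq\mathcal{L}_{\rm rec}(x)$, so minimality applied to $y=\phi(z)$ yields $\mathcal{L}(y)=\mathcal{L}_{\rm rec}(x)$ directly; the final upgrade to primitive substitutivity via the uniformly-recurrent-implies-primitive-substitutive result then goes through (and the unsupported side claim that every letter of $R$ occurs in $z$ is not needed). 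With that repair your route is genuinely different from the paper's: the paper invokes the existence of a \emph{primitive} sub-substitution of any growing substitution, iterates it on a periodic letter to get a primitive substitutive $y$ with $\mathcal{L}(y)\subseteq\mathcal{L}_{\rm rec}(x)$ for free, and uses the bounded-gaps hypothesis only once, at the very end, to promote the inclusion to an equality; you instead build the possibly non-primitive sub-substitution on the recurrent letters and buy primitivity from uniform recurrence. Each variant leans on one nontrivial cited result, and yours has the mild advantage of not needing the primitive sub-substitution existence theorem, at the cost of needing the Chacon-type upgrade.
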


\begin{proof}
Let $\sigma : A^* \to A^*$ be a growing substitution prolongable on the letter $a$ and $\phi : A^* \to B^*$ be a coding such that $x=\phi (\sigma^\omega (a))$.
We can suppose that all letters of $A$ occurs in some $\sigma^n (a)$, $n\in \mathbb{N}$.
Proposition 15 in \cite{Durand:2002b} asserts that growing substitutions have at least one primitive sub-substitution.
Let $\tau : C^*\to C^*$ be a primitive sub-substitution of $\sigma$.
It is necessarily a growing substitution and it is easy to check that each word $\sigma^n (b)$, $b\in C$, $n\in \mathbb{N}$, is recurrent in  $\sigma^\omega (a)$. 
There exist $c\in C$ and a positive integer $l$ such that $c$ is a prefix of $\sigma^l (c)$.
Thus $y = \phi \left(\left( \sigma^l\right)^\omega (c)\right)$ exists, is primitive substitutive and $\mathcal{L} (y) \subset \mathcal{L}_{\rm rec} (x)$.
As primitive substitutive sequences are uniformly recurrent, from the hypothesis we deduce that $\mathcal{L}_{\rm rec} (x)=\mathcal{L} ( y) $.
\end{proof}

\begin{lemma}
\label{lemma:classicalarguments}
Let $x$ be a sequence and $u$ be a word such that: if $u=v^k$ then $k=1$.
Suppose that each word in $\mathcal{L}_{\rm rec} (x)$ appears in $x$ with bounded gaps and that $\mathcal{L}_{\rm rec} (x)$ contains $\{ u^n ; n\in \mathbb{N} \}$. 
Then, $x$ is ultimately periodic and $\mathcal{L}_{\rm rec} (x) = \mathcal{L} ( uuu\cdots )$.
\end{lemma}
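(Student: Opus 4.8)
The plan is to establish the two assertions — that $\mathcal{L}_{\rm rec}(x)=\mathcal{L}(uuu\cdots)$ and that $x$ is ultimately periodic — in three steps, invoking the primitivity of $u$ (that $u=v^k$ forces $k=1$) only at the very end. I would first dispose of the easy inclusion $\mathcal{L}(uuu\cdots)\subseteq\mathcal{L}_{\rm rec}(x)$: any factor $z$ of $uuu\cdots$ of length $m$ is a factor of $u^n$ for $n$ large enough, and $u^n\in\mathcal{L}_{\rm rec}(x)$ by hypothesis; since $u^n$ occurs infinitely often, so does $z$, whence $z\in\mathcal{L}_{\rm rec}(x)$.

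The decisive step is the reverse inclusion $\mathcal{L}_{\rm rec}(x)\subseteq\mathcal{L}(uuu\cdots)$, and this is precisely where the bounded-gap hypothesis is used. Let $w\in\mathcal{L}_{\rm rec}(x)$. Since $w$ appears with bounded gaps, there exist $G_w$ and $T_w$ such that every factor of $x$ of length $G_w$ situated beyond position $T_w$ contains an occurrence of $w$. Put $p=|u|$ and choose $N$ with $Np\ge G_w$. Now $u^N\in\mathcal{L}_{\rm rec}(x)$ has infinitely many occurrences, so I can select one starting beyond $T_w$: this occurrence is a factor equal to $u^N$, of length $Np\ge G_w$, lying past $T_w$, hence it contains $w$. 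Therefore $w$ is a factor of $u^N$, that is $w\in\mathcal{L}(uuu\cdots)$. Together with the first step this yields $\mathcal{L}_{\rm rec}(x)=\mathcal{L}(uuu\cdots)$.

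To deduce ultimate periodicity, I would work at the fixed length $2p$. Since the alphabet is finite there are only finitely many words of length $2p$, and among them the non-recurrent ones each occur finitely often; let $M$ exceed all their last occurrence positions. Then for every $i\ge M$ the factor $x_{[i,i+2p-1]}$ is recurrent, hence lies in $\mathcal{L}_{2p}(uuu\cdots)$. But every length-$2p$ factor of the $p$-periodic word $uuu\cdots$ has its letter in position $0$ equal to its letter in position $p$; applied to $x_{[i,i+2p-1]}$ this reads $x_i=x_{i+p}$. As this holds for all $i\ge M$, the sequence $x$ is ultimately periodic, with period $p$ (primitivity of $u$ making $p$ the exact period and rendering the description $\mathcal{L}(uuu\cdots)$ unambiguous).

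The main obstacle is conceptual rather than computational, and it resides in the middle step: the point is to realize that the bounded-gap hypothesis forces every recurrent word to sit \emph{inside a single block} $u^N$ as soon as $N$ is large enough, which immediately confines $\mathcal{L}_{\rm rec}(x)$ to $\mathcal{L}(uuu\cdots)$. Once this is seen, the passage to ultimate periodicity is almost free, since a length-$2p$ window of a $p$-periodic word already carries the relation $x_i=x_{i+p}$; the only care needed is the elementary remark that, for a fixed length, all but finitely many occurrences are of recurrent words.
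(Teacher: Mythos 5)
Your proof is correct, and it departs from the paper's argument in an instructive way. The paper's proof goes through return words: it shows that a tail $t$ of $x$ is a concatenation of return words to $u$, observes that each such return word $w$ (with $wu$ recurrent, hence appearing with bounded gaps) must occur inside some power of $u$, writes $wu=su^kp$, and then invokes the primitivity of $u$ together with the classical fact that $pp'=p'p$ forces a common power (Lothaire) to conclude $w=u$ and $t=u^\omega$. You share the pivotal observation --- bounded gaps for recurrent words plus the recurrence of all $u^n$ forces every recurrent factor into a single block $u^N$ --- but you then bypass return words and word combinatorics entirely: having identified $\mathcal{L}_{\rm rec}(x)$ with $\mathcal{L}(uuu\cdots)$, you get ultimate periodicity from the elementary remark that, the alphabet being finite, all sufficiently late windows of length $2|u|$ are recurrent and hence satisfy $z_0=z_{|u|}$, i.e.\ $x_i=x_{i+|u|}$ eventually. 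Your route is more elementary (no return words, no appeal to \cite[Proposition 1.3.2]{Lothaire:1983}) and in fact never uses the primitivity of $u$, which shows that hypothesis is not needed for the conclusion as stated --- it only serves to make $|u|$ the minimal period; the paper's route, by contrast, uses primitivity essentially and has the side benefit of exhibiting the tail explicitly as $u^\omega$ via the return-word decomposition, which is the form in which the lemma is reused elsewhere. One small point to make explicit if you write this up: ``appears with bounded gaps'' gives a bound $G$ on the distance between consecutive occurrences starting from the first one, so the window length guaranteeing an occurrence of $w$ should be taken as $G+|w|$ (your $G_w$), which is exactly how you use it.
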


\begin{proof}
Let $u$ be such that: if $u=v^k$ then $k=1$.
Let us reproduce arguments already used in \cite[Theorem 18]{Durand:2002a} in order to conclude that $x$ is ultimately periodic.
As $u$ belongs to $\mathcal{L}_{\rm rec} (x)$, it appears with bounded gaps in $x$.
Thus, the set $\mathcal{R}_{x}(u)$ of return words to $u$ is finite. 
There exists an integer $N$ such that all the words $wu\in \mathcal{L}(x_N x_{N+1} \cdots )$, $w\in \mathcal{R}_{x} (u)$, appear infinitely many times in $x$. 
Hence these words appear with bounded gaps in $x$. 
We set $t = x_N x_{N+1} \cdots $. 
We can suppose that $u$ is a prefix of $t$. 
Then $t$ is a concatenation of return words to $u$. 
Let $w$ be a return word to $u$ such that $wu$ belongs to $\mathcal{L} (t)$.
It appears in some power of $u$: $wu = su^kp$ where $k\geq 1$, $s$ is a suffix of $u$, with $|s|<|u|$, and $p$ a prefix of $u$, with $|p|<|u|$.
Hence $p$ is also a suffix of $u$ and there exists $p'$ such that $u=pp'=p'p$.
If $p$ and $p'$ are non-empty then $u=v^k$ for some $v$ and $k\geq 2$ (see \cite[Proposition 1.3.2]{Lothaire:1983}).
This is in contradiction with our assumption. 
Consequently, $p$ or $p'$ is empty. 
Doing the same with $s$ we will deduce that necessarily $w=u$.
It follows that $t = u^{\omega}$, $x$ is ultimately periodic and $\mathcal{L}_{\rm rec} = \mathcal{L} (t) = \mathcal{L} (uuu\cdots )$.
\end{proof}

We have seen that Theorem \ref{theo:main} was proven in \cite{Durand:2002a} in the context of ``good'' substitutions. 
The following lemma, together with Theorem \ref{the:boundedgap}, is the main argument to treat, in Theorem \ref{theo:main}, the case where one of the two substitutive sequences is not ``good''.

\begin{lemma}
\label{lemma:finallemma}
Let $x$ be purely substitutive w.r.t. $ \sigma : A^* \to A^*$ satisfying \eqref{enum:nonerasing} and \eqref{enum:pansiot} of Section \ref{subsec:reduction}, having a dominating eigenvalue $\alpha$ strictly greater than $1$ and such that all letters of $A$ occurs in  $x$. 
Suppose there exists a coding $\phi : A^*\to B^*$ such that all words belonging to $\mathcal{L}_{\rm rec} (\phi (x))$ appear with bounded gaps in $\phi (x)$.
Then, either all letters of $A$ have the same growth rate (w.r.t $\sigma$) or $\phi (x)$ is ultimately periodic.
\end{lemma}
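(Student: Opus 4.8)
The plan is to assume that the letters of $A$ do not all have the same growth rate and to prove that $\phi(x)$ is then ultimately periodic; everything is arranged so as to exhibit, inside $\mathcal{L}_{\rm rec}(\phi(x))$, all the powers $w,w^{2},w^{3},\dots$ of a single non-empty word $w$, after which Lemma~\ref{lemma:classicalarguments} (whose bounded-gap hypothesis is exactly the one we are given) yields the conclusion at once. Write $\alpha=\Theta$ for the dominating eigenvalue and put $A_{<}=\{b\in A\mid\theta(b)<\alpha\}$, which is non-empty by assumption. Since by Proposition~\ref{prop:croissance} the growth rate of a letter is the largest growth rate among the letters it eventually generates, no letter of $A_{<}$ can generate a letter of rate $\alpha$; hence $\sigma(A_{<})\subseteq A_{<}^{*}$ and the slow letters carry a genuine sub-substitution, of dominating eigenvalue some $\theta^{*}<\alpha$. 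The letters of $A_{<}$ that occur infinitely often in $x$ then produce recurrent blocks $\sigma^{n}(c)$ ($c\in A_{<}$) consisting only of slow letters and of length tending to infinity.

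The driving force is a \emph{gap argument}. Each block $\phi(\sigma^{n}(c))$ is a recurrent factor of $\phi(x)$ of unbounded length that uses only the letters of $\phi(A_{<})$. Were some letter of $B$ realised only by letters of maximal rate, it would be a recurrent letter of $\phi(x)$ missing from all the $\phi(\sigma^{n}(c))$, and would therefore occur with unbounded gaps, against the hypothesis; the same reasoning applied to the recurrent maximal-growth blocks shows that, symmetrically, every recurrent word of $\phi(x)$ must appear inside arbitrarily long slow blocks. Thus the bounded-gap hypothesis forces $\mathcal{L}_{\rm rec}(\phi(x))$ to be supported by the slow sub-substitution, reducing matters to a system with strictly smaller dominating eigenvalue. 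When $\sigma$ is growing one may moreover invoke Lemma~\ref{lemma:langreclr} and Theorem~\ref{prop:subprimlr} to write $\mathcal{L}_{\rm rec}(\phi(x))=\mathcal{L}(y)$ with $y$ primitive substitutive and linearly recurrent, say with constant $L$; by Lemma~\ref{lemma:lr}(1) a non-periodic such $y$ cannot contain any power $w^{L+1}$, so it will suffice to produce powers of unbounded exponent to force $y$, and hence $\phi(x)$, to be periodic.

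The mechanism that actually manufactures these powers, and the place where assumption~\eqref{enum:pansiot} enters, is the terminal case in which non-growing letters are present. There \eqref{enum:pansiot} provides a growing letter $a'$ with $\sigma(a')=va'u$ (or $ua'v$) and $u$ a non-empty word of non-growing letters, so that $\sigma^{n}(a')$ contains $a'\,u\,\sigma(u)\cdots\sigma^{n-1}(u)$. As the non-growing letters have bounded images, the words $\sigma^{k}(u)$ take only finitely many values and the sequence $(\sigma^{k}(u))_{k}$ is eventually periodic in $k$ (after replacing $\sigma$ by a suitable power, the non-growing part acts as a permutation); consequently the one-sided word $u\,\sigma(u)\,\sigma^{2}(u)\cdots$ is eventually periodic, with some period word $W$. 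Since $a'$ is recurrent, every prefix of this word — in particular $W^{m}$ for all $m$ — occurs infinitely often in $x$, whence $\{\phi(W)^{m}\mid m\in\mathbb{N}\}\subseteq\mathcal{L}_{\rm rec}(\phi(x))$. Taking a primitive root of $\phi(W)$ and applying Lemma~\ref{lemma:classicalarguments} gives that $\phi(x)$ is ultimately periodic.

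The hard part is the purely growing case, where all rates are $>1$ but not all equal and no non-growing letter is available to feed the construction above. Here the bounded-gap hypothesis must do all the work: by the gap argument the recurrent subshift of $\phi(x)$ is a single minimal (uniquely ergodic) system that has to be realised simultaneously inside the maximal-growth blocks (length $\sim n^{D}\alpha^{n}$) and inside the much shorter slow blocks (length $\sim(\theta^{*})^{n}$). I expect the main obstacle to be turning this coexistence into periodicity: one must show that the quantitative clash between the two growth rates $\alpha$ and $\theta^{*}$ and the linear-recurrence constant $L$ of $y$ is incompatible with an aperiodic recurrent language, all the while controlling the way the coding $\phi$ may merge slow and maximal-growth letters. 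This is precisely the step where the hypothesis is genuinely quantitative, and where Theorem~\ref{the:boundedgap}, which supplies the bounded gaps in the intended application, is indispensable.
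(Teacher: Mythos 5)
Your treatment of the non-growing case is essentially the paper's: assumption~\eqref{enum:pansiot} gives $\sigma(c)=vcu$ with $u$ non-growing, the finitely many values of $\sigma^k(u)$ make $u\,\sigma(u)\,\sigma^2(u)\cdots$ eventually periodic, all powers of the period word are recurrent, and Lemma~\ref{lemma:classicalarguments} concludes. (You should still check, as the paper does, that the distinguished growing letter is actually recurrent in $x$ --- when it equals the starting letter and $v=\epsilon$ one instead concludes directly that $x$ is ultimately periodic --- but that is a repairable detail.)

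The genuine gap is the growing case, which you explicitly leave as a description of ``the main obstacle'' rather than an argument. Your reduction --- every recurrent word of $\phi(x)$ occurs in the arbitrarily long slow blocks $\phi(\sigma^n(c))$, $c\in A_<$ --- is correct but does not by itself yield periodicity: the slow sub-substitution may well be primitive and aperiodic, so ``reducing to a system with smaller dominating eigenvalue'' proves nothing, and you never produce the unbounded powers $w^m$ that your own strategy requires. What the paper actually does here is quantitative and goes in a different direction: starting from a recurrent word $awb$ with $w$ of growth type $(f,\beta)$, $\beta<\alpha$, and $a,b$ of rate $\alpha$, it builds recurrent words $\sigma^k(a_n)\,U(n,k)\,\sigma^k(b_n)$ whose middle part $U(n,k)$ has length $O((n+k)^{f+1}\beta^{n+k})$ while the flanks have length $\asymp k^d\alpha^k$. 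Passing to a linearly recurrent $z$ with $\mathcal{L}(z)=\mathcal{L}_{\rm rec}(\phi(x))$ (Lemma~\ref{lemma:langreclr}, Theorem~\ref{prop:subprimlr}) and assuming $z$ non-periodic, one picks a suitable prefix $u$ of $\phi(\sigma^k(b''))$ and observes that the words $uQ_s\phi(U(m_i,k))$ are pairwise distinct concatenations of return words to $u$ of length at most $(K+3)|\sigma^k(b'')|$; by Lemma~\ref{lemma:concatenationrw} there are at most $(1+K)^{2(K+3)(K+1)K}$ such words, and choosing more than that many indices $i$ gives the contradiction, forcing $z$ periodic and then Lemma~\ref{lemma:classicalarguments} again. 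This counting/pigeonhole step is the heart of the lemma and is entirely absent from your proposal; without it (or a substitute) the proof is incomplete.
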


\begin{proof}
Suppose $\sigma$ is prolongable on $a'$, $x=\sigma^\omega (a')$ and $y=\phi (x)$.
Suppose $\sigma $ has at least two growth rates and let us show $\phi (x) $ is ultimately periodic.
The growth type of $\sigma $ is $(d,\alpha )$ for some $d$.

We consider two cases.

Suppose $\sigma $ is not a growing substitution. 
Then, from Assumption \eqref{enum:pansiot} in Section~\ref{subsec:reduction}, there exists a growing letter $c\in A$ such that $\sigma (c) =
  vcu$ (or $ucv$) with $u\in B^*\setminus \{ \epsilon \}$ where $B$ is the set of non-growing letters.
It is convenient to notice that $\sigma^n (c) = \sigma^{n-1} (v)cu\sigma (u) \cdots \sigma^{n-1} (u)$ or $\sigma^n (c) = \sigma^{n-1} (u) \cdots \sigma (u) u c\sigma^{n-1} (v)$ and there exist two distinct positive integers $i$ and $j$ such that $\sigma^i (u)=\sigma^j (u)$.
 
  Let us show that $c$ necessarily belongs to $\mathcal{L}_{\rm rec} (x)$ whenever $x$ is not ultimately periodic.
 
 Suppose $c=a'$ and  $\sigma (c) = vcu$. 
 If $v$ is not the empty word then $\sigma (c) = cv'cu$ and $c$ is clearly recurrent in $x$.
 If $v$ is the empty word then $\sigma^n (c) = cu \sigma (u) \cdots \sigma^{n-1} (u)$.
  Consequently, because $\sigma^i (u)=\sigma^j (u)$,  $x$ is ultimately periodic.
 Suppose $c=a'$ and  $\sigma (c) = ucv$.
 Then $a'$ is a non-growing letter. This contradicts our assumptions.
 Suppose $c\not = a'$. Then $\sigma^m (a') = a'sct$, for some $m$, and $c$ is clearly recurrent.
  Finally we can consider $c$ is recurrent in $x$.
  
The letter $c$, and consequently $\sigma^n ( c)$, $n\in \mathbb{N}$, having infinitely many occurrences in $x$, it is also the case for the words $u^{(n)}=(\sigma^i (u) \sigma^{i+1} (u) \cdots \sigma^{j-1} (u))^n$, $n\in \mathbb{N}$.
Then all words in $\left\{ \left(\phi (u^{(1)})\right)^n  ; n\in \mathbb{N} \right\}$ appear infinitely many times in $\phi (x)$.
Lemma \ref{lemma:classicalarguments} implies that $\phi (x)$ is ultimately periodic.

\medskip

Suppose now that $\sigma $ is a growing substitution.
Then, all letters of $A$ have an exponential growth rate $\gamma$ with $\gamma >1$.
From the assumptions,
the numbers

$$\beta  = \max \{  \theta (a)  \mid \theta (a) < \alpha , a \in A \} \ \  {\rm  and } \ \  
f = \max \{  d(a) \mid \theta (a) =\beta , a \in A \}
$$ 

exist and $(f,\beta)$ is a growth type of some letter of $A$ with $1<\beta < \alpha$.

It necessarily exist two letters occurring infinitely maning times in $x$, one having growth type $(f,\beta )$ and the other having an exponential growth rate $\alpha$.  
Hence, there exists a non-empty word $awb$, appearing infinitely many times in $x$, where $w$ has growth type $(f , \beta )$, and, $a$ and $b$ have an exponential growth rate $\alpha$. 
Once we observe that for all letters $c$ with exponential growth rate $\alpha$ there exist letters $c',c''$ with exponential growth rate $\alpha$ such that $\sigma (c) = u'c'u=vc''v'$ where $u$ and $v$ (being possibly the empty word) have exponential growth rates strictly less than $\alpha$, then,
by a recurrence starting with $awb$, it is easy to prove that there exist two sequences of letters $(a_n)_n$ and $(b_n)_n$ and two sequences of words $(u_n)_n$ and $(v_n)_n$ such that for all $n\in \mathbb{N}$:

\begin{enumerate}
\item
$a_n u_n \sigma (u_{n-1}) \cdots \sigma^{n-1} (u_1) \sigma^{n} (w) \sigma^{n-1} (v_1) \cdots \sigma (v_{n-1}) v_n b_n $
appears infinitely many times in $x$;
\item
$a_n$ has growth type $(d_n , \alpha)$;
\item
$b_n$ has growth type $(e_n , \alpha)$;
\item
$u_n$ has growth type $(f_n,\beta_n)$ with $\beta_n<\alpha$ and $|u_n|< \max_{c\in A} |\sigma (c)|$;
\item
$v_n$ has growth type $(g_n,\gamma_n)$ with $\gamma_n<\alpha$ and $|v_n|< \max_{c\in A} |\sigma (c)|$.
\end{enumerate}

Consequently, for all $n$ and $k$ the word $W(n,k) = \sigma^k (a_n)U(n,k)\sigma^k (b_n)$, where

$$
\begin{array}{l}
U(n,k) = \sigma^k(u_n) \cdots \sigma^{k+n-1} (u_1) \sigma^{k+n} (w) \sigma^{k+n-1} (v_1) \cdots  \sigma^k (v_n)  ,
\end{array}
$$

appears infinitely many times in $x$. 
There exist a strictly increasing sequence $(m_i)$ and two letters $a$ and $b$ such that 

\begin{equation}
\label{proof:aprime}
a_{m_i} = a'' \ \ {\rm and} \ \  b_{m_i} = b'' \ \ \hbox{\rm for all} \ \ i.
\end{equation}

From Lemma \ref{lemma:langreclr},  
there exists a linearly recurrent sequence $z$ such that 

\begin{enumerate}
\item[(P1)]
$\mathcal{L}(z) = \mathcal{L}_{\rm rec} (y)$.
\end{enumerate}

If $z$ is periodic, we achieve the proof with Lemma \ref{lemma:classicalarguments}. 
Hence we suppose $z$ is not periodic.

From the lemmata \ref{lemma:lr} and \ref{lemma:concatenationrw} there exist some constants $K$, $d$ $e$, such that:
\begin{enumerate}
\item[(P2)]
for all $u\in \mathcal{L}_{\rm rec} (y)$  and $w\in \mathcal{R}_y (u)\cap \mathcal{L}_{\rm rec} (y)$ we have 
$ \frac{|u|}{K} \leq |w|\leq K|u|$;
\item [(P3)]
for all $u\in \mathcal{L}_{\rm rec}(y)$ we have $\# \mathcal{R}_y (u)\cap \mathcal{L}_{\rm rec} (y)\leq K$;
\item [(P4)]
for all $u\in \mathcal{L}_{\rm rec}(y)$ and all $l\in \mathbb{N}$;
$$
\# \cup_{0\leq n \leq l} \left( \mathcal{R}_y(u)^* \cap \mathcal{L}_n(y)\cap \mathcal{L}_{\rm rec} (y)\right) \leq \left( 1+ K \right)^{\frac{lK}{|u|}} .
$$
\item [(P5)]
for all $n$ and $k$
\begin{align*}
\label{proof:aprimeprime}
|U(n,k)|   & \leq K\left( (n+k)^{f+1} \beta^{n+k} \right) ,\\
\frac{1}{K} k^{d} \alpha^{k}  & \leq |\sigma^k (a'')| \leq K k^{d}  \alpha^{k} \hbox{ and }\\
\frac{1}{K} k^{e}\alpha^{k}   & \leq |\sigma^k (b'')| \leq K k^{e}  \alpha^{k} .
\end{align*}
\end{enumerate}

From \eqref{proof:aprime} and the previous inequalities, there exists $k$ such that $|\sigma^k (b'')|\leq |\sigma^k (a'')|$ (the other case can be treated in the same way), $2(1+K)\leq |\sigma^k (b'')|$ and 

\begin{align}
|U(m_i,k)| \leq |\sigma^k (b'')| \hbox{ for all } 1\leq i\leq \left( 1+ K \right)^{2(K+3)(K+1)K} +1 .
\end{align}

From (P1) and (P2), for all $j$, all words in $\mathcal{L}_{\rm rec} (y)\cap \mathcal{L}_j (y)$ appear in all words of $\mathcal{L}_{\rm rec} (y)\cap \mathcal{L} _{(K+1)j} (y)$.
Let $u$ be a prefix of $\phi (\sigma^k (b''))$ such that  

$$
\frac{|\sigma^k (b'')|}{K+1} -1 \leq |u| \leq  \frac{|\sigma^k (a'')|}{K+1} .
$$

Then $u$ is non-empty and occurs in $\phi (\sigma^k (a''))$.
We can decompose $\phi (\sigma^k (a''))$ and $\phi (\sigma^k (b''))$ in such a way that $\phi (\sigma^k (a'')) = Q_puQ_s$ and $\phi (\sigma^k (b'')) = uR_s$ with

$$
|Q_s|\leq (K+1)|u| .
$$

Observe that $u$, $a''$, $b''$ and $Q_s$ do not depend on $i$.
Moreover, for all $i$ belonging to $[1 , \left( 1+ K \right)^{2(K+3)(K+1)K} +1]$, the word $uQ_s \phi (U(m_i,k))$ belongs to $\mathcal{L}_{\rm rec} (y)$, is a concatenation of return words to $u$ and satisfies

$$
|uQ_s \phi (U(m_i,k))| \leq (K+3)|\sigma^k (b'')|  
. $$

Moreover,

\begin{align}
\label{ineg:rwcontradict}
\# \bigcup_{n=0}^{(K+3)|\sigma^k (b'')|} \mathcal{R}_u(y)^* \cap \mathcal{L}_n(y) 
& \leq 
\left( 1+ K \right)^{\frac{(K+3)|\sigma^k (b'')|K}{|u|}}
 \leq 
\left( 1+ K \right)^{2(K+3)(K+1)K} .
\end{align}

But observe that $(|U(m_i,k)|)_{0\leq i \leq 1 + ( 1+ K )^{2(K+3)(K+1)K}}$ being strictly increasing, the words 
$uQ_s \phi (U(m_i,k)) $, $0\leq i \leq 1 + ( 1+ K )^{2(K+3)(K+1)K}$ are all distinct (and belong to $\cup_{0\leq n \leq (K+3)|\sigma^k (b'')|  } \mathcal{R}_u(y)^* \cap \mathcal{L}_n(y)$). 
This is in contradiction with \eqref{ineg:rwcontradict}.
\end{proof}

\begin{proof}[Proof of Theorem \ref{theo:main}]
The sufficient condition is proven in \cite[Proposition 7]{Durand:2002a}.
The necessary condition is proven in \cite[Theorem 18]{Durand:2002a} in the context of ``good'' substitution. 
We conclude reducing the problem to better assumptions (Subsection \ref{subsec:reduction}), then using Theorem \ref{the:boundedgap} and Lemma \ref{lemma:finallemma} (after we observe that substitutions having a primitive substitution and that are not "good" necessarily have at least two growth rates).
\end{proof}

\subsection{For morphisms instead of codings}

We start with an example. 
Let $\sigma : \{0,1 \}^* \to \{0,1 \}^*$ be defined by $\sigma (0)=01$ and $\sigma (1)=0$.
The sequence $x=\sigma^\omega (0)$ is $\frac{1+\sqrt{5}}{2}$-substitutive.
Now let $\tau : \{a,0,1 \}^* \to \{a, 0,1 \}^*$ be defined by $\tau (a) = a0a$, $\sigma (0)=a01$ and $\sigma (1)=a0a$.
The sequence $y= \sigma^\omega (a)$ is $3$-substitutive.
But $x = \phi (y)$ where $\phi : \{a,0,1 \}^* \to \{0,1 \}^*$ is defined by $\phi (0) = 0$, $\phi (1) = 1$ and $\phi (a)$ is the empty word. 
Hence a Cobham like theorem does not hold for erasing coding morphisms instead of coding.
But thanks to Corollary \ref{coro:nonerasing} (and Theorem \ref{theo:Cassaigne&Nicolas})  it holds for non-erasing morphisms. 

\begin{theo}
Let $\alpha >1$ and $\beta >1$ be two multiplicatively independent Perron numbers, and, $\phi : A^* \to C^*$ and $\psi : B^* \to C^*$, defined on finite alphabets, be non-erasing morphisms.
Suppose $x\in A^\mathbb{N}$ is $\alpha$-substitutive and $y\in B^\mathbb{N}$ is $\beta$-substitutive.
If $\phi (x) = \psi (y)$ then $\phi (x)$ is ultimately periodic.
\end{theo}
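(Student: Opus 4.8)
The plan is to reduce this statement directly to Theorem \ref{theo:main}, using Corollary \ref{coro:nonerasing} as the bridge between morphisms and codings. Set $z = \phi(x) = \psi(y)$; the whole point is that applying a non-erasing morphism to a substitutive sequence keeps it substitutive, at the sole cost of raising the dominating eigenvalue to some positive power.

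First I would apply Corollary \ref{coro:nonerasing} to $x$ and $\phi$: since $x$ is $\alpha$-substitutive and $\phi$ is a non-erasing morphism, the sequence $z=\phi(x)$ is $\alpha^k$-substitutive (with respect to a non-erasing substitution) for some integer $k\geq 1$. Symmetrically, applying the corollary to $y$ and $\psi$ shows that $z=\psi(y)$ is $\beta^\ell$-substitutive for some integer $\ell\geq 1$. The hypothesis $\phi(x)=\psi(y)$ is exactly what lets me conclude that one and the same sequence $z$ is simultaneously $\alpha^k$-substitutive and $\beta^\ell$-substitutive.

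Next I would check that the pair $(\alpha^k,\beta^\ell)$ still satisfies the hypotheses of Theorem \ref{theo:main}. Both numbers remain Perron: if $\alpha$ is Perron, then so is $\alpha^k$, since any conjugate of $\alpha^k$ is of the form $\alpha_i^{\,k}$ for some conjugate $\alpha_i\neq\alpha$ of $\alpha$, whence $|\alpha_i^{\,k}|=|\alpha_i|^k<\alpha^k$; the same applies to $\beta^\ell$. Moreover, as already recalled in Subsection \ref{subsec:reduction}, $\alpha$ and $\beta$ are multiplicatively independent if and only if $\alpha^k$ and $\beta^\ell$ are. Hence $\alpha^k$ and $\beta^\ell$ are two multiplicatively independent Perron numbers, and Theorem \ref{theo:main} applies to $z$, yielding that $z=\phi(x)$ is ultimately periodic.

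I do not expect a serious obstacle here: the entire weight of the argument is borne by Theorem \ref{theo:main} and Corollary \ref{coro:nonerasing}, and the only thing left to verify is the elementary closure facts above — that a power of a Perron number is again a Perron number and that multiplicative independence survives the passage to powers. The one point that genuinely needs Corollary \ref{coro:nonerasing} (rather than a naive attempt to treat $\phi,\psi$ directly) is that the corollary is false for \emph{erasing} morphisms, as the example preceding the statement shows; thus the non-erasing hypothesis is essential precisely because it is what guarantees the two eigenvalue powers stay Perron and multiplicatively independent.
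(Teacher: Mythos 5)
Your proof is correct and follows exactly the route the paper intends: the paper only sketches this result by pointing to Corollary \ref{coro:nonerasing}, and you have filled in precisely that reduction — pass to $\alpha^k$- and $\beta^\ell$-substitutivity of the common image, check that powers preserve the Perron property and multiplicative independence, and invoke Theorem \ref{theo:main}. No gaps.
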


\bigskip

{\bf Acknowledgements.}
The author would like to thank M. Rigo and V. Berth\'e for their valuable reading of the first draft.

\bibliographystyle{alpha}
\bibliography{totalcobham}

\begin{thebibliography}{CKMR80}

\bibitem[AB08]{Adamczewski&Bell:2008}
B.~Adamczewski and J.~Bell.
\newblock Function fields in positive characteristic: expansions and {C}obham's
  theorem.
\newblock {\em J. Algebra}, 319:2337--2350, 2008.

\bibitem[AB10a]{Adamczewski&Bell:xy}
B.~Adamczewski and J.~Bell.
\newblock An analogue of {C}obham's theorem for fractals.
\newblock {\em Trans. Amer. Math. Soc.}, 2010.
\newblock to appear.

\bibitem[AB10b]{Adamczewski&Bell:xx}
B.~Adamczewski and J.~Bell.
\newblock Automata in number theory.
\newblock In J.-E. Pin, editor, {\em AutoMathA Handbook}. 2010.
\newblock preprint.

\bibitem[AM95]{Allouche&MendesFrance:1995}
J.-P. Allouche and M.~{Mend{\`e}s France}.
\newblock Automata and automatic sequences.
\newblock In {\em Beyond quasicrystals ({L}es {H}ouches, 1994)}, pages
  293--367. Springer, Berlin, 1995.

\bibitem[AS92]{Allouche&Shallit:1992}
J.-P. Allouche and J.~O. Shallit.
\newblock The ring of {$k$}-regular sequences.
\newblock {\em Theoret. Comput. Sci.}, 98:163--197, 1992.

\bibitem[AS03]{Allouche&Shallit:2003}
J.-P. Allouche and J.~O. Shallit.
\newblock {\em Automatic Sequences, Theory, Applications, Generalizations}.
\newblock Cambridge University Press, 2003.

\bibitem[BB07]{Boigelot&Brusten:2007}
B.~Boigelot and J.~Brusten.
\newblock A generalization of {C}obham's theorem to automata over real numbers.
\newblock In {\em Automata, languages and programming}, volume 4596 of {\em
  Lecture Notes in Comput. Sci.}, pages 813--824. Springer, Berlin, 2007.

\bibitem[BBB08]{Boigelot&Brusten&Bruyere:2008}
B.~Boigelot, J.~Brusten, and V.~Bruy{\`e}re.
\newblock On the sets of real numbers recognized by finite automata in multiple
  bases.
\newblock In {\em Proc. 35th ICALP (Reykjavik)}, volume 5126 of {\em Lecture
  Notes in Computer Science}, pages 112--123. Springer-Verlag, 2008.

\bibitem[BBL09]{Boigelot&Brusten&Leroux:2009}
B.~Boigelot, J.~Brusten, and J.~Leroux.
\newblock A generalization of {S}emenov's theorem to automata over real
  numbers.
\newblock In R.~A. Schmidt, editor, {\em Automated Deduction, 22nd
  International Conference, CADE 2009, McGill University, Montreal}, volume
  5663 of {\em Lecture Notes in Computer Science}, pages 469--484, 2009.

\bibitem[{Bel}07]{Bell:2005}
J.~P. {Bell}.
\newblock A generalization of {C}obham's theorem for regular sequences.
\newblock {\em S\'em. Lothar. Combin.}, 54A:Art. B54Ap, 15 pp. (electronic),
  2005/07.

\bibitem[B{\`e}s97]{Bes:1997}
A.~B{\`e}s.
\newblock Undecidable extensions of {B}\"uchi arithmetic and
  {C}obham-{S}em\"enov theorem.
\newblock {\em J. Symbolic Logic}, 62:1280--1296, 1997.

\bibitem[B{\`es}00]{Bes:2000}
A.~B{\`es}.
\newblock An extension of the {C}obham-{S}em{\"e}nov theorem.
\newblock {\em J. Symbolic Logic}, 65:201--211, 2000.

\bibitem[B{\`e}s01]{Bes:2001}
A.~B{\`e}s.
\newblock A survey of arithmetical definability.
\newblock {\em Bull. Belg. Math. Soc. Simon Stevin}, pages 1--54, 2001.
\newblock A tribute to Maurice Boffa.

\bibitem[BHMV94]{Bruyere&Hansel&Michaux&Villemaire:1994}
V.~Bruy\`ere, G.~Hansel, C.~Michaux, and R.~Villemaire.
\newblock Logic and $p$-recognizable sets of integers.
\newblock {\em Bull. Belg. Math. Soc.}, 1:191--238, 1994.

\bibitem[CD08]{Cortez&Durand:2008}
M.~I. Cortez and F.~Durand.
\newblock Self-similar tiling systems, topological factors and stretching
  factors.
\newblock {\em Discrete Comput. Geom.}, 40:622--640, 2008.

\bibitem[{\v C}G86]{Cerny&Gruska:1986}
A.~{\v C}ern\'y and J.~Gruska.
\newblock Modular trellises.
\newblock In G.~Rozenberg and A.~Salomaa, editors, {\em The Book of {L}}, pages
  45--61. Springer-Verlag, 1986.

\bibitem[Chr79]{Christol:1979}
G.~Christol.
\newblock Ensembles presque {p\'eriodiques} $k$-reconnaissables.
\newblock {\em Theoret. Comput. Sci.}, 9:141--145, 1979.

\bibitem[CKMR80]{Christol&Kamae&MendesFrance&Rauzy:1980}
G.~Christol, T.~Kamae, M.~{Mend\`es France}, and G.~Rauzy.
\newblock Suites {alg\'ebriques}, automates et substitutions.
\newblock {\em Bull. Soc. Math. France}, 108:401--419, 1980.

\bibitem[CN03]{Cassaigne&Nicolas:2003}
Julien Cassaigne and Fran{\c{c}}ois Nicolas.
\newblock Quelques propri\'et\'es des mots substitutifs.
\newblock {\em Bull. Belg. Math. Soc. Simon Stevin}, 10:661--676, 2003.

\bibitem[Cob68]{Cobham:1968}
A.~Cobham.
\newblock On the hartmanis-stearns problem for a class of tag machines.
\newblock In {\em IEEE Conference Record of Ninth Annual Symposium on Switching
  and Automata Theory}, pages 51--60, 1968.

\bibitem[Cob69]{Cobham:1969}
A.~Cobham.
\newblock On the base-dependence of sets of numbers recognizable by finite
  automata.
\newblock {\em Math. Systems Theory}, 3:186--192, 1969.

\bibitem[Cob72]{Cobham:1972}
A.~Cobham.
\newblock Uniform tag sequences.
\newblock {\em Math. Systems Theory}, 6:164--192, 1972.

\bibitem[DHS99]{Durand&Host&Skau:1999}
F.~Durand, B.~Host, and C.~Skau.
\newblock Substitutive dynamical systems, {B}ratteli diagrams and dimension
  groups.
\newblock {\em Ergodic Theory Dynam. Systems}, 19:953--993, 1999.

\bibitem[DL06]{Damanik&Lenz:2006}
D.~Damanik and D.~Lenz.
\newblock Substitutional dynamical systems: Characterization of linear
  repetitivity and applications.
\newblock {\em J. Math. Anal. Appl.}, 321:766--780, 2006.

\bibitem[DR09]{Durand&Rigo:2009}
F.~Durand and M.~Rigo.
\newblock Syndeticity and independent substitutions.
\newblock {\em Adv. in Appl. Math.}, 42:1--22, 2009.

\bibitem[DR10]{Durand&Rigo:xx}
F.~Durand and M.~Rigo.
\newblock On {C}obham's theorem.
\newblock preprint, 2010.

\bibitem[Dur96]{Durand:1996}
F.~Durand.
\newblock {\em Contribution \`a l'\'etude des suites substitutives}.
\newblock PhD thesis, Universit\'e de la M\'editerran\'ee, Novembre 1996.

\bibitem[Dur98a]{Durand:1998a}
F.~Durand.
\newblock A characterization of substitutive sequences using return words.
\newblock {\em Discrete Math.}, 179:89--101, 1998.

\bibitem[Dur98b]{Durand:1998b}
F.~Durand.
\newblock A generalization of {C}obham's theorem.
\newblock {\em Theory Comput. Syst.}, 31:169--185, 1998.

\bibitem[Dur98c]{Durand:1998c}
F.~Durand.
\newblock Sur les ensembles d'entiers reconnaissables.
\newblock {\em J. Th{\'e}or. Nombres Bordeaux}, 10:65--84, 1998.

\bibitem[Dur02a]{Durand:2002b}
F.~Durand.
\newblock Combinatorial and dynamical study of substitutions around the theorem
  of {C}obham.
\newblock In {\em Dynamics and randomness ({S}antiago, 2000)}, volume~7 of {\em
  Nonlinear Phenom. Complex Systems}, pages 53--94. Kluwer Acad. Publ.,
  Dordrecht, 2002.

\bibitem[Dur02b]{Durand:2002a}
F.~Durand.
\newblock A theorem of {C}obham for non primitive substitutions.
\newblock {\em Acta Arith.}, 104:225--241, 2002.

\bibitem[Dur08]{Durand:2008}
F.~Durand.
\newblock Cobham-{S}emenov theorem and {$\Bbb N^d$}-subshifts.
\newblock {\em Theoret. Comput. Sci.}, 391:20--38, 2008.

\bibitem[Dur10]{Durand:2010}
F.~Durand.
\newblock Bratteli diagrams.
\newblock In V.~Berth\'e and M.~Rigo, editors, {\em Combinatorics, Automata and
  Number Theory}, volume 135, pages 338--386. Cambridge University Press, 2010.

\bibitem[Eil74]{Eilenberg:1974}
S.~Eilenberg.
\newblock {\em Automata, Languages, and Machines}, volume~A.
\newblock Academic Press, 1974.

\bibitem[Fab94]{Fabre:1994}
S.~Fabre.
\newblock Une {g\'en\'eralisation} du {th\'eor\`eme} de {Cobham}.
\newblock {\em Acta Arith.}, 67:197--208, 1994.

\bibitem[Fab95]{Fabre:1995}
S.~Fabre.
\newblock Substitutions et {$\beta$}-syst\`emes de num\'eration.
\newblock {\em Theoret. Comput. Sci.}, 137:219--236, 1995.

\bibitem[Fag97]{Fagnot:1997}
I.~Fagnot.
\newblock Sur les facteurs des mots automatiques.
\newblock {\em Theoret. Comput. Sci.}, 172:67--89, 1997.

\bibitem[Han82]{Hansel:1982}
G.~Hansel.
\newblock A propos d'un {th\'eor\`eme} de {Cobham}.
\newblock In D.~Perrin, editor, {\em Actes de la {F\^ete} des Mots}, pages
  55--59. Greco de Programmation, CNRS, Rouen, 1982.

\bibitem[HJ90]{Horn&Johnson:1990}
R.~A. Horn and C.~R. Johnson.
\newblock {\em Matrix analysis}.
\newblock Cambridge University Press, Cambridge, 1990.
\newblock Corrected reprint of the 1985 original.

\bibitem[Hon09]{Honkala:2009}
J.~Honkala.
\newblock On the simplification of infinite morphic words.
\newblock {\em Theoret. Comput. Sci.}, 410:997--1000, 2009.

\bibitem[HS03]{Hansel&Safer:2003}
G.~Hansel and T.~Safer.
\newblock Vers un th\'eor\`eme de {C}obham pour les entiers de {G}auss.
\newblock {\em Bull. Belg. Math. Soc. Simon Stevin}, 10:723--735, 2003.

\bibitem[Ked06]{Kedlaya:2006}
K.~S. Kedlaya.
\newblock Finite automata and algebraic extensions of function fields.
\newblock {\em J. Th\'eor. Nombres Bordeaux}, 18:379--420, 2006.

\bibitem[KS75]{Katai&Szabo:1975}
I.~K{\'a}tai and J.~Szab{\'o}.
\newblock Canonical number systems for complex integers.
\newblock {\em Acta Sci. Math. (Szeged)}, 37:255--260, 1975.

\bibitem[LM95]{Lind&Marcus:1995}
D.~Lind and B.~Marcus.
\newblock {\em An introduction to symbolic dynamics and coding}.
\newblock Cambridge University Press, Cambridge, 1995.

\bibitem[Lot83]{Lothaire:1983}
M.~Lothaire.
\newblock {\em Combinatorics on Words}, volume~17 of {\em Encyclopedia of
  Mathematics and Its Applications}.
\newblock Addison-Wesley, 1983.

\bibitem[Muc03]{Muchnik:2003}
A.~Muchnik.
\newblock The definable criterion for definability in {P}resburger arithmetic
  and its applications.
\newblock {\em Theoret. Comput. Sci.}, 290:1433--1444, 2003.

\bibitem[MV93]{Michaux&Villemaire:1993}
C.~Michaux and R.~Villemaire.
\newblock Cobham's theorem seen through {B}\"uchi's theorem.
\newblock In {\em Automata, languages and programming ({L}und, 1993)}, volume
  700 of {\em Lecture Notes in Computer Science}, pages 325--334. Springer,
  Berlin, 1993.

\bibitem[MV96]{Michaux&Villemaire:1996}
C.~Michaux and R.~Villemaire.
\newblock Presburger arithmetic and recognizability of sets of natural numbers
  by automata: {New} proofs of {Cobham's} and {Semenov's} theorems.
\newblock {\em Ann. Pure Appl. Logic}, 77:251--277, 1996.

\bibitem[Pan83]{Pansiot:1982}
J.-J. Pansiot.
\newblock Hi\'erarchie et fermeture de certaines classes de tag-syst\`emes.
\newblock {\em Acta Inform.}, 20:179--196, 1983.

\bibitem[Pan84]{Pansiot:1984a}
J.-J. Pansiot.
\newblock {Complexit\'e} des facteurs des mots infinis {engendr\'es} par
  morphismes {it\'er\'es}.
\newblock In J.~Paredaens, editor, {\em ICALP84}, volume 172 of {\em Lecture
  Notes in Computer Science}, pages 380--389. Springer-Verlag, 1984.

\bibitem[PB97]{Point&Bruyere:1997}
F.~Point and V.~Bruy{\`e}re.
\newblock On the {C}obham-{S}emenov theorem.
\newblock {\em Theory Comput. Syst.}, 30:197--220, 1997.

\bibitem[Per90]{Perrin:1990}
D.~Perrin.
\newblock Finite automata.
\newblock In J.~van Leeuwen, editor, {\em Handbook of Theoretical Computer
  Science, Volume B:Formal Models and Semantics}, pages 1--57. Elsevier --- MIT
  Press, 1990.

\bibitem[Rig00]{Rigo:2000}
M.~Rigo.
\newblock Generalization of automatic sequences for numeration systems on a
  regular language.
\newblock {\em Theoret. Comput. Sci.}, 244:271--281, 2000.

\bibitem[RW06]{Rigo&Waxweiler:2006}
M.~Rigo and L.~Waxweiler.
\newblock A note on syndeticity, recognizable sets and cobham's theorem.
\newblock {\em Bull. European Assoc. Theor. Comput. Sci.}, 88:169--173,
  February 2006.

\bibitem[Sal87]{Salon:1987}
O.~Salon.
\newblock Suites automatiques \`a multi-indices et alg\'ebricit\'e.
\newblock {\em C. R. Acad. Sci. Paris S\'er. I Math.}, 305:501--504, 1987.

\bibitem[Sem77]{Semenov:1977}
A.~L. Semenov.
\newblock The {P}resburger nature of predicates that are regular in two number
  systems.
\newblock {\em Sibirsk. Mat. \v Z.}, 18:403--418, 479, 1977.
\newblock In Russian. English translation in {\it Siberian J.\ Math.} {\bf 18}
  (1977), 289--300.

\bibitem[Sol97]{Solomyak:1997}
B.~Solomyak.
\newblock Dynamics of self-similar tilings.
\newblock {\em Ergodic Theory Dynam. Systems}, 17:695--738, 1997.

\bibitem[SS78]{Salomaa&Soittola:1978}
A.~{Salomaa} and M.~Soittola.
\newblock {\em Automata-theoretic aspects of formal power series}.
\newblock Springer-Verlag, New York, 1978.
\newblock Texts and Monographs in Computer Science.

\end{thebibliography}

\end{document}